\newtheorem{theorem}{Theorem}[section]
\newtheorem{lemma}[theorem]{Lemma}
\newtheorem{fact}[theorem]{Fact}
\newtheorem{proposition}[theorem]{Proposition}
\newtheorem{corollary}[theorem]{Corollary}
\newtheorem{claim}[theorem]{Claim}
\theoremstyle{definition}
\newtheorem{definition}[theorem]{Definition}
\newtheorem{remark}[theorem]{Remark}
\let \restr = \upharpoonright
\let \into = \longrightarrow
\let \sub = \subseteq
\let \elsub = \preccurlyeq
\let \av = \arrowvert
\let \a = \alpha
\let \b = \beta
\let \d = \delta
\let \l = \lambda
\let \k = \kappa
\let \m = \mu
\let \n = \nu
\let \p = \pi
\let \r = \rho
\let \t = \theta
\let \G = \Gamma
\let \D = \Delta
\let \s = \sigma
\let \x = \xi
\let \o = \omega
\let \al = \aleph
\let \mtcl = \mathcal
\let \mtbb = \mathbb
\DeclareMathOperator{\dom}{dom}
\DeclareMathOperator{\range}{range}
\DeclareMathOperator{\cf}{cf}
\DeclareMathOperator{\ot}{ot}
\DeclareMathOperator{\cl}{cl}
\DeclareMathOperator{\ssup}{sup}
\DeclareMathOperator{\Lim}{Lim}
\DeclareMathOperator{\Ord}{\textsf{Ord}}
\DeclareMathOperator{\CH}{\textsf{CH}}
\DeclareMathOperator{\Measuring}{\textsf{Measuring}}
\title{Corrigendum to ``Measuring club-sequences together with the continuum large''}
\author[D. Asper\'o]{David Asper\'o}
\address{David Asper\'o, School of Mathematics, University of East Anglia, Norwich NR4 7TJ, UK}
\email{d.aspero@uea.ac.uk}
\author[M.A. Mota]{Miguel Angel Mota}
\address{Miguel Angel Mota,  Departamento de Matem\'aticas,
ITAM,
01080, Mexico City, Mexico}
\email{motagaytan@gmail.com}
\date{}
\begin{document}

\subjclass[2010]{03E50, 03E57, 03E35, 03E05}

\maketitle
\pagestyle{myheadings}\markright{Corrigendum to ``Measuring club-sequences together with the continuum large''}

\begin{abstract}
 $\Measuring$ says that for e\-very sequence $(C_\delta)_{\delta<\omega_1}$ with each $C_\delta$ being a closed subset of $\delta$ there is a club $C\subseteq\omega_1$ such that for every $\delta\in C$,
a tail of $C\cap\delta$ is either contained in or disjoint from $C_\delta$. In our JSL paper  ``Measuring club-sequences together with the continuum large'' we claimed to prove the consistency of $\Measuring$ with $2^{\aleph_0}$ being arbitrarily large, thereby answering a question of Justin Moore. The proof in that paper was flawed. In the presented corrigendum we provide a correct proof of that result. The construction works over any model of $\textsc{ZFC} + \CH$
and can be described as the result of performing a finite-support forcing
construction
with side conditions consisting of suitable symmetric systems
 of models with markers.
\end{abstract}

\section{Mea culpa and introduction}

The present note is a corrigendum to our \cite{AM4}, published in the JSL in 2017. That same year, Tanmay Inamdar found out that our main proof in that paper had an error. Inamdar's argument does not exhibit an actual counterexample to the relevant claims in our proof in \cite{AM4}, but it does show, at the very least, that our proof in \cite{AM4} is incomplete as it does not establish those claims. In the present note we fix that proof.
Our main new ingredients are the following.
\begin{itemize}
\item The use of a suitable notion of `closed symmetric system of models with markers'.
\item The fact that at all stages $\b$ such that $\cf(\b)>\o$, only markers $\r<\b$ are allowed.
\item The use of a partial square sequence on ordinals of cofinality $\o_1$.
\end{itemize}

We will construct a certain sequence $(\mtcl P_\b\,:\,\b\leq\k)$ of partial orders (the forcing witnessing our main theorem will be $\mtcl P_\k$). This sequence is not obviously an iteration, in the sense of $\mtcl P_\a$ being a complete suborder  of $\mtcl P_\b$ for all $\a<\b$. On the other hand, we will have an $\o_2$-club $D$ of $\k$ such that $\mtcl P_\a$ is a complete suborder of $\mtcl P_\b$ for all $\a<\b$ in $D\cup\{\k\}$, so our construction will give rise after all to a forcing iteration of length $\k+1$ (as $\k$ will be regular).


$\Measuring$ (see \cite{EMM}) is the following very strong form of the failure of Club Guessing at $\omega_1$.

\begin{definition} $\Measuring$ holds if and only if for every sequence $\vec C=(C_\delta\,:\,\delta\in \omega_1)$, if each $C_\delta$ is a closed subset of $\delta$ in the order topology, then there is a club $C\sub\omega_1$ such that for every $\delta\in C$ there is some $\alpha<\delta$ such that either \begin{itemize}

\item $(C \cap\delta)\setminus\alpha\subseteq C_\delta$, or
\item $(C\setminus\alpha)\cap C_\delta=\emptyset$.
\end{itemize}

\end{definition}

In the above definition, we will say that \emph{$C$ measures $\vec C$.}  $\Measuring$ is of course equivalent to its restriction to club-sequences $\vec C$ on $\omega_1$, i.e., to sequences of the form $\vec C=(C_\delta\,:\,\delta\in \Lim(\omega_1))$, where each $C_\delta$ is a club of  $\delta$ and where $\Lim(\omega_1)$ denotes the set of countable limit ordinals.

In this paper we prove that $\Measuring$ is consistent with $2^{\aleph_0}$ being arbitrarily large. This answers a question of Moore, who asked whether $\Measuring$ is consistent with $2^{\aleph_0}>\aleph_2$ (see also \cite{forcing-conseqs}).

Given an infinite cardinal $\k$ and a stationary set $S\sub\k$, we call  $\vec E=(E_\a\,:\,\a\in S)$ a \emph{partial square sequence on $S$} iff
\begin{itemize}
\item for each $\a\in S$, $E_\a$ is a club of $\a$ of order type $\cf(\a)$, and
\item for all $\a$, $\a'\in S$, if $\bar\a$ is a limit point of both $E_\a$ and $E_{\a'}$, then $E_\a\cap\bar\a=E_{\a'}\cap\bar\a$.
\end{itemize}

Also, given a regular cardinal $\m<\k$, we write $S^\k_\m$ to denote the set $\a<\k$ such that $\cf(\a)=\m$.

In our main theorem (Theorem \ref{mainthm}), we will need $\CH$ to hold, as well as the existence of a partial square sequence on $S^\k_{\o_1}$, for some  regular cardinal $\kappa>\o_2$ such that $2^{{<}\k}=\k$.
It is easy to obtain such partial square sequences using the following proposition.

\begin{proposition}\label{partial-sq-prop} Let $\m$ be an infinite regular cardinal, let $\k\geq\m$ be an infinite cardinal such that $\k<\aleph_\m$, and suppose $\Box_\l$ holds for every cardinal $\l$ such that $\m\leq\l\leq\k$. Then there is a partial square sequence on $S^{\k^+}_{\m}$.
\end{proposition}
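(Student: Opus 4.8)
The plan is to establish, by induction on the cardinals $\lambda$ with $\mu\leq\lambda\leq\kappa$, the following strengthening: there is a sequence $\vec E^\lambda=(E^\lambda_\beta\,:\,\beta<\lambda^+,\ \cf(\beta)=\mu)$ in which each $E^\lambda_\beta$ is a club of $\beta$ of order type $\mu$, the coherence clause from the definition of a partial square sequence holds among the $E^\lambda_\beta$, and $\vec E^\lambda$ extends $\vec E^{\lambda'}$ whenever $\lambda'$ is a cardinal with $\mu\leq\lambda'<\lambda$. Since $S^{\kappa^+}_\mu=\{\beta<\kappa^+\,:\,\cf(\beta)=\mu\}$, the sequence $\vec E^\kappa$ will then be the desired partial square sequence.

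The only point at which the hypothesis $\kappa<\aleph_\mu$ is needed is the observation that \emph{no cardinal $\lambda$ with $\mu<\lambda\leq\kappa$ has $\cf(\lambda)=\mu$}: such a $\lambda$ would be a singular, hence limit, cardinal $\aleph_\delta$ with $\cf(\delta)=\cf(\lambda)=\mu$, forcing $\delta\geq\mu$ and so $\lambda\geq\aleph_\mu$, contrary to $\lambda\leq\kappa<\aleph_\mu$. For the base case $\lambda=\mu$ I would fix a $\Box_\mu$-sequence $(C^\mu_\xi\,:\,\xi\in\Lim(\mu^+))$ and set $E^\mu_\beta:=C^\mu_\beta$; here $\mu=\cf(\beta)\leq\ot(C^\mu_\beta)\leq\mu$, and the full coherence of $\Box_\mu$ gives the (weaker) partial square coherence at once.

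For the inductive step at $\lambda>\mu$, let $\vec E^{{<}\lambda}:=\bigcup_{\mu\leq\lambda'<\lambda}\vec E^{\lambda'}$; by the induction hypothesis this is a coherent family with domain $\{\beta<\lambda\,:\,\cf(\beta)=\mu\}$, using $\sup\{(\lambda')^+:\lambda'<\lambda\}=\lambda$. It remains to define $E^\lambda_\beta$ for $\beta\in(\lambda,\lambda^+)$ with $\cf(\beta)=\mu$ (note $\beta\neq\lambda$, as $\cf(\lambda)\neq\mu$). Fix a $\Box_\lambda$-sequence and, crucially, replace each member $C^\lambda_\beta$ with $\beta>\lambda$ by $C^\lambda_\beta\setminus(\lambda+1)$; deleting this bounded initial part again yields a $\Box_\lambda$-sequence (neither the cofinality of the order type nor the coherence is affected, and all remaining limit points now lie above $\lambda$). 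Enumerating $C^\lambda_\beta=\{c^\beta_i\,:\,i<\beta^\ast\}$ increasingly, we have $\cf(\beta^\ast)=\cf(\beta)=\mu$ and $\beta^\ast\leq\lambda$, hence $\beta^\ast<\lambda$ by the observation above, so $E^{{<}\lambda}_{\beta^\ast}$ is already defined; I would then set
\[
E^\lambda_\beta:=\{\,c^\beta_i\,:\,i\in E^{{<}\lambda}_{\beta^\ast}\,\}.
\]
Since $i\mapsto c^\beta_i$ is strictly increasing and continuous at limits (because $C^\lambda_\beta$ is closed), this is a club of $\beta$ of order type $\ot(E^{{<}\lambda}_{\beta^\ast})=\mu$.

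The real work, and the step I expect to be the main obstacle, is checking coherence of $\vec E^\lambda$: the delicate points are realizing that the $\Box_\lambda$-sequence must be truncated below $\lambda$ so that a certain ``mixed'' case becomes vacuous, and keeping the whole family increasing in $\lambda$ so that the coherent union $\vec E^{{<}\lambda}$ exists. Suppose $\bar\beta$ is a common limit point of $E^\lambda_{\beta_1}$ and $E^\lambda_{\beta_2}$, with $\beta_1\leq\beta_2$. If $\beta_2<\lambda$ we invoke coherence of $\vec E^{{<}\lambda}$. If $\beta_1<\lambda\leq\beta_2$, then every element of $E^\lambda_{\beta_2}\subseteq C^\lambda_{\beta_2}$ exceeds $\lambda$, so $\bar\beta\leq\beta_1<\lambda$ cannot be a limit point of $E^\lambda_{\beta_2}$, and this case does not arise. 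If $\beta_1,\beta_2\in(\lambda,\lambda^+)$, then $\bar\beta$ belongs to the closed sets $C^\lambda_{\beta_1}$ and $C^\lambda_{\beta_2}$, so $\Box_\lambda$-coherence gives $C^\lambda_{\bar\beta}=C^\lambda_{\beta_1}\cap\bar\beta=C^\lambda_{\beta_2}\cap\bar\beta$; with $j^\ast:=\ot(C^\lambda_{\bar\beta})$ we get $\bar\beta=c^{\beta_1}_{j^\ast}=c^{\beta_2}_{j^\ast}$, $j^\ast\leq\beta_1^\ast,\beta_2^\ast<\lambda$, and, by monotonicity and continuity of the enumerations, $j^\ast$ is a common limit point of $E^{{<}\lambda}_{\beta_1^\ast}$ and $E^{{<}\lambda}_{\beta_2^\ast}$. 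Coherence of $\vec E^{{<}\lambda}$ then yields $E^{{<}\lambda}_{\beta_1^\ast}\cap j^\ast=E^{{<}\lambda}_{\beta_2^\ast}\cap j^\ast$, and since $c^{\beta_1}_i=c^{\beta_2}_i$ (both equal to the $i$-th element of $C^\lambda_{\bar\beta}$) for all $i<j^\ast$, we conclude $E^\lambda_{\beta_1}\cap\bar\beta=E^\lambda_{\beta_2}\cap\bar\beta$, which closes the induction.
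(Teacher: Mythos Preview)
Your proof is correct and follows essentially the same approach as the paper's. Both arguments fix truncated $\Box_\lambda$-sequences (with $C^\lambda_\beta\cap(\lambda+1)=\emptyset$ for $\beta>\lambda$), define $E_\beta$ by pulling back the already-constructed $E_{\ot(C^\lambda_\beta)}$ along the enumeration (equivalently, the inverse collapsing map) of $C^\lambda_\beta$, and verify coherence by combining $\Box_\lambda$-coherence with the inductively known coherence at smaller ordinals; the only difference is that the paper phrases this as a single recursion on $\alpha<\kappa^+$, whereas you organize it as an induction on the cardinal $\lambda$ producing nested sequences $\vec E^\lambda$.
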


\begin{proof}
We may obviously assume $\m\geq\o_1$. We construct a partial square sequence $(E_\a\,:\,\a\in\k^+\cap\cf(\m))$ by recursion on $\a$.

For every cardinal $\l$ such that $\m\leq\l\leq\k$, let $(C^\l_\a\,:\,\a\in\Lim(\l^+))$ be a fixed $\Box_\l$-sequence, which we may assume is such that $C^\l_\a\cap\l=\emptyset$ for all $\a>\l$.
To start with, we let $E_\a=C^\m_\a$ for all $\a\in\m^+\cap\cf(\m)$. Now let $\a\in\k^+\cap\cf(\m)$ be above $\m^+$. Let $\l$ be the unique cardinal such that $\l\leq\a<\l^+$, and note that in fact $\l<\a$ since $\k<\al_\m$. Let $\tau_\a=\ot(C^\l_\a)$ and let $\pi_\a:C^\l_\a\to \tau_\a$ be the collapsing function of $C^\l_\a$. We then let $E_\a=\pi_\a^{-1}``E_{\tau_\a}$, which is defined since $\cf(\tau_\a)=\m$ and $\tau_\a<\l<\a$. Since $E_\a$ is a club of $\a$ of order type $\m$, it suffices to show that if $\a'\in\a\cap\cf(\m)$ and $\bar\a$ is a limit point of both $E_\a$ and $E_{\a'}$, then $E_\a\cap\bar\a=E_{\a'}\cap\bar\a$.

Since $C_\a\cap\l=\emptyset$, $\a'$ is necessarily above $\l$. But then $E_{\a'}=\p_{\a'}^{-1}``E_{\tau_{\a'}}$, where $\tau_{\a'}=\ot(C^\l_{\a'})$ and $\pi_{\a'}:C^\l_{\a'}\to \tau_{\a'}$ is the collapsing function of $C^\l_{\a'}$. Since $\bar\a$ is a limit point of both $C^\l_\a$ and $C^\l_{\a'}$, we know that $C^\l_{\bar\a}=C^\l_\a\cap\bar\a=C^\l_{\a'}\cap\bar\a$. But then $\bar\tau=\pi_\a(\bar\a)=\pi_{\a'}(\bar\a)$ is a limit point of both $E_{\tau_\a}$ and $E_{\tau_{\a'}}$. This implies that $E_{\tau_\a}\cap\bar\tau=E_{\tau_{\a'}}\cap\bar\tau$, and therefore $E_\a\cap\bar\a=\pi^{-1}_\a``(E_{\tau_\a}\cap\bar\tau)=\pi^{-1}_{\a}``(E_{\tau_{\a'}}\cap\bar\tau)=\pi^{-1}_{\a'}``(E_{\tau_{\a'}}\cap\bar\tau)=E_{\a'}\cap\bar\a$.
\end{proof}


A partial order $\mtbb P$ is $\al_2$-Knaster if for every sequence $(q_\x\,:\,\x<\o_2)$ of $\mtbb P$-conditions there is a set $I\sub\o_2$ of cardinality $\al_2$ such that $q_\x$ and $q_{\x'}$ are compatible for all $\x$, $\x'\in I$. Every $\al_2$-Knaster partial order has of course the $\al_2$-chain condition.

Our main theorem is the following.

\begin{theorem}\label{mainthm} ($\CH$)
Let $\kappa>\o_2$ be a regular cardinal such that $2^{{<}\kappa}=\kappa$.
Suppose there is a partial square sequence on $S^\k_{\o_1}$.
There is then a partial order $\mtcl P$ with the following pro\-perties.
\begin{enumerate}
\item $\mtcl P$ is proper.
\item $\mtcl P$ is $\aleph_2$-Knaster.
\item $\mtcl P$ forces the following statements.
\begin{enumerate}
\item $\Measuring$
\item $2^{\aleph_0}=2^{\aleph_1}=\kappa$
\end{enumerate}
\end{enumerate}
\end{theorem}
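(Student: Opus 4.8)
The plan is to construct $\mtcl P$ as the direct limit $\mtcl P_\k$ of a finite-support-style sequence $(\mtcl P_\b\,:\,\b\leq\k)$ in which conditions carry two kinds of data: a finite working part consisting of partial attempts (with finitely much information) to build, for each club-sequence $\vec C_\b$ arising from a name at stage $\b$, a generic club $C_\b$ measuring it; and a side-condition part consisting of a finite symmetric system $\mtcl N$ of countable elementary submodels of some $H(\chi)$, now decorated with markers—ordinals $\r_N<\k$ attached to each $N\in\mtcl N$—so that the pair $(\mtcl N,\text{markers})$ forms a \emph{closed} symmetric system in the sense to be isolated. The closure requirement, together with the stipulation that at stages $\b$ with $\cf(\b)>\o$ only markers $\r<\b$ are permitted, is exactly what repairs the gap Inamdar found: it is what lets one amalgamate conditions over a model $N$ in the side condition and thereby run the properness and chain-condition arguments. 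A standard bookkeeping function will guarantee that every $\mtcl P_\k$-name for a club-sequence on $\o_1$ is handled cofinally often, so that $\mtcl P_\k$ forces $\Measuring$.

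The verification then proceeds through the usual list. First one shows by simultaneous induction on $\b$ that each $\mtcl P_\b$ is well-defined, that the $\mtcl P_\b$ for $\b$ in a suitable $\o_2$-club $D$ of $\k$ (together with $\b=\k$) form an honest iteration with $\mtcl P_\a\lessdot\mtcl P_\b$, and—the crucial technical point—that $\mtcl P_\b$ is \emph{strongly proper} with respect to the models $N$ appearing in its side conditions: given a condition $p$ with $N\in\mtcl N^p$, one produces an $(N,\mtcl P_\b)$-generic condition by the amalgamation-over-$N$ argument, which uses the closedness of the symmetric system and the marker discipline to re-embed arbitrary extensions of $p\restr N$ back below $p$. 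Properness for arbitrary countable $N\prec H(\theta)$ then follows by the standard argument (absorb $N$ into the side condition). For the $\al_2$-Knaster property, given $\o_2$ conditions one refines using $\CH$ (so there are only $\al_1$ isomorphism types of the countable "footprint" of a condition) and a $\Delta$-system argument on the finite working parts and on the supports; here the partial square sequence on $S^\k_{\o_1}$ is used to line up the markers/models coherently across conditions in the system, so that any two conditions in the resulting size-$\al_2$ set can be amalgamated. Finally, $2^{\al_0}=2^{\al_1}=\k$ holds because $\mtcl P_\k$ has size $\k$, is $\al_2$-c.c., $2^{<\k}=\k$ in the ground model, and the generic adds $\k$-many subsets of $\o_1$ (one new club $C_\b$ at each relevant stage) while $\CH$ is preserved at no intermediate stage in a way that would push $2^{\al_0}$ above $\k$; the $\al_2$-c.c. plus $|\mtcl P_\k|=\k$ caps both cardinal exponentials at $\k$, and the genericity of the $C_\b$'s forces them down.

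The main obstacle, and the heart of the corrigendum, is the inductive proof of strong properness/amalgamation over the side-condition models—equivalently, showing that the "closed symmetric system with markers" framework is robust enough to carry the induction through limit stages of cofinality $\o_1$ (this is precisely where the old proof broke and where the partial square sequence enters). One must check that when $\cf(\b)=\o_1$ and one wants to amalgamate two conditions, the square-coherence of the sequence $(E_\a\,:\,\a\in S^\k_{\o_1})$ lets one find a single model $N$ in both side conditions through which to factor, with compatible markers—and that the restriction "only markers $\r<\b$ at stages of uncountable cofinality" guarantees $\mtcl P_\b$ is the direct limit of $(\mtcl P_\a\,:\,\a<\b)$ there, so that a condition really does live in some $\mtcl P_\a$, $\a<\b$, making the induction hypothesis applicable. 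Getting the precise definition of "closed" right—strong enough for amalgamation, weak enough to be preserved and to be achievable when extending conditions—is the delicate part; everything else is a careful but routine adaptation of the side-condition-with-finite-working-part machinery from our earlier papers.
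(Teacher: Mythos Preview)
Your plan matches the paper's approach in its essentials: the forcing is $\mtcl P_\k$ from a sequence $(\mtcl P_\b)_{\b\leq\k}$ whose conditions carry a finite working part plus a closed symmetric system of models with markers, with the restriction $\r<\b$ on markers at stages of cofinality $\geq\o_1$, and the verification runs through Knaster, properness, and Measuring as you outline.

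However, you have misplaced the role of the partial square sequence. The $\al_2$-Knaster proof (Lemma~\ref{cc}) does \emph{not} use $\vec E^1$ at all; it is a straight induction on $\b$ using $\CH$ to bound isomorphism types of countable footprints, a $\Delta$-system argument on the models $M_\n$, and the amalgamation lemma for $\cong_{\vec\Phi_\b}$-isomorphic systems (Lemma~\ref{amalg1}). The partial square enters only in the properness proof (Lemma~\ref{properness2}), and not quite in the way you describe. At a limit stage $\b$ with $\cf(\b)\leq\o_1$, setting $\bar\b=\ssup(N\cap\b)$, one must pick an ordinal $\s$ in $E^1_\b\cap\bar\b$ (or $E^0_\b$ when $\cf(\b)=\o$) above all the finite data of $q$ below $\bar\b$; the coherence of $\vec E^1$ is what guarantees (Claim~\ref{E}) that this cofinal set, and hence $\s$, lies in \emph{every} model $Q\in\mtcl M^q_{\bar\b}$ with $\d_Q\geq\d_N$. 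That is what lets one restrict to $\mtcl P_\s$, invoke the induction hypothesis there to find $q^*\in N$, and then amalgamate $q$, $q^*$, and the common extension $q'\in\mtcl P_\s$ back up to $\mtcl P_\b$. Your phrasing (``find a single model $N$ in both side conditions through which to factor, with compatible markers'') does not capture this mechanism: the issue is not locating a common model, but making a single cofinal sequence in $\bar\b$ simultaneously visible to all the high models in the side condition so that a uniform restriction level $\s$ exists.
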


The following is now an immediate corollary of Proposition \ref{partial-sq-prop} and Theorem \ref{mainthm}.

\begin{corollary}\label{main-corollary} ($\CH$) Let $\kappa$ be a regular cardinal such that $\o_2<\kappa<\al_{\o_1}$ and $2^{{<}\kappa}=\kappa$. Suppose $\Box_\l$ holds for every uncountable cardinal $\l$ such that $\l^+\leq\k$.
There is then a partial order $\mtcl Q$ with the following pro\-perties.
\begin{enumerate}
\item $\mtcl Q$ is proper.
\item $\mtcl Q$ is $\aleph_2$-Knaster.
\item $\mtcl Q$ forces the following statements.
\begin{enumerate}
\item $\Measuring$
\item $2^{\aleph_0}=2^{\aleph_1}=\kappa$
\end{enumerate}
\end{enumerate}
\end{corollary}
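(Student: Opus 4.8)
First, I would note that the corollary follows by feeding the $\Box$ hypotheses into Proposition \ref{partial-sq-prop} and then invoking Theorem \ref{mainthm}. The one point that is not pure bookkeeping is checking that the partial square sequence produced by Proposition \ref{partial-sq-prop} lands on $S^\k_{\o_1}$; for this I first need to observe that $\k$ is a successor cardinal.

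So, first I would argue that $\k$ is a successor cardinal. Since $\o_2<\k<\al_{\o_1}$, we have $\k=\al_\x$ for some countable ordinal $\x$. If $\x$ were a limit ordinal, then $\cf(\k)=\cf(\x)\le\x<\k$, contradicting the regularity of $\k$; hence $\x$ is a successor ordinal and $\k=\bar\k^+$ for some infinite cardinal $\bar\k$, necessarily with $\bar\k\geq\o_2$ since $\k>\o_2$. Next I would apply Proposition \ref{partial-sq-prop} with $\m=\o_1$ and with $\bar\k$ in the role of its ``$\k$''. Its hypotheses are met: $\m=\o_1$ is an infinite regular cardinal, $\o_1\leq\bar\k<\k<\al_{\o_1}=\al_\m$, and for every cardinal $\l$ with $\o_1\leq\l\leq\bar\k$ we have that $\l$ is uncountable with $\l^+\leq\bar\k^+=\k$, so $\Box_\l$ holds by the hypothesis of the corollary. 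Proposition \ref{partial-sq-prop} therefore yields a partial square sequence on $S^{\bar\k^+}_{\o_1}=S^\k_{\o_1}$.

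Finally, $\CH$ holds, $\k>\o_2$ is regular with $2^{{<}\k}=\k$, and we have just produced a partial square sequence on $S^\k_{\o_1}$; hence Theorem \ref{mainthm} applies and provides a partial order $\mtcl P$ which is proper, $\al_2$-Knaster, and forces both $\Measuring$ and $2^{\al_0}=2^{\al_1}=\k$. Taking $\mtcl Q=\mtcl P$ completes the argument. I do not anticipate any genuine obstacle: all the substance lies in Proposition \ref{partial-sq-prop} and Theorem \ref{mainthm}, and what remains is only the routine verification, sketched above, that their hypotheses and conclusions line up.
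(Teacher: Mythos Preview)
Your proposal is correct and follows exactly the approach the paper indicates: the paper simply states that the corollary is ``an immediate corollary of Proposition \ref{partial-sq-prop} and Theorem \ref{mainthm}.'' You have merely spelled out the routine verification that $\k$ is a successor cardinal (so that Proposition \ref{partial-sq-prop}, applied with $\m=\o_1$ and the predecessor of $\k$ in place of its $\k$, yields a partial square sequence on $S^\k_{\o_1}$), after which Theorem \ref{mainthm} applies directly.
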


Much of the notation used in this paper follows the standards set forth in \cite{JECH} and \cite{KUNEN}. Other, less standard, pieces of notation will be introduced as needed.\newline

\textbf{Acknowledgment}: We thank Justin Moore for his valuable comments on earlier versions of this work. Special thanks are due to Tanmay Inamdar for detecting a serious flaw in \cite{AM4}, which compelled us to write the present corrigendum, and to Tadatoshi Miyamoto for also detecting some serious problems.

\section{The construction}\label{the_construction}

Let us assume $\CH$
and let $\k>\o_2$ be a regular cardinal with $2^{{<}\k}=\k$ and such that there is a partial square sequence on $S^\k_{\o_1}$. Let $$\vec E^1=(E^1_\d\,:\,\d\in S^\k_{\o_1})$$ be such a partial square sequence. Let also
 $$\vec E^0=(E^0_\d\,:\,\d\in S^\k_\o)$$ be a ladder system on $S^\k_\o$ (i.e., every $E^0_\d$ is a club of $\d$ of order type $\omega$) such that $\cf(\a)\leq\o$ for every $\d\in S^\k_\o$ and $\a\in E^0_\d$.
Let $\vec E=\vec E^0\cup\vec E^1$.

We fix some pieces of notation that will be used throughout the paper:
 If $N$ is a set such that $N\cap \omega_1\in \omega_1$, $\delta_N$ denotes $N\cap\omega_1$. $\delta_N$ is also called \emph{the height of $N$}.
 If $N_0$ and $N_1$ are $\in$-isomorphic models satisfying the Axiom of Extensionality, we denote the unique isomorphism $$\Psi_{N_0, N_1}:(N_0; \in)\into (N_1; \in)$$ by $\Psi_{N_0, N_1}$.


 If $T\sub H(\kappa)$ and $N$ is a  model, we will write $(N, T)$ as short-hand for $(N, T\cap N)$.

  We will use a variant of
 the following
 notion of symmetric system from \cite{forcing-conseqs}.

 \begin{definition}\label{hom}
Let $T\subseteq H(\kappa)$ and let $\mathcal N$ be a collection of countable
 models.
We say that \emph{$\mathcal N$ is a
 $T$-symmetric system}
if and only if the following holds.

\begin{enumerate}

\item For every $N \in\mathcal N$, $(N; \in, T)$ is an elementary substructure of $(H(\kappa); \in, T)$.

\item Given $N_0$, $N_1\in\mathcal N$, if $\delta_{N_0}=\delta_{N_1}$, then there is a unique isomorphism $$\Psi_{N_0,  N_1}:(N_0; \in, T)\into (N_1; \in, T)$$

\noindent Furthermore, $\Psi_{N_0, N_1}$ is the identity on $N_0\cap N_1$.

\item For all $N_0$, $N_1$, $M\in \mathcal N$, if $M \in N_0$ and $\delta_{N_0}=\delta_{N_1}$, then $\Psi_{N_0, N_1}(M)\in\mathcal N$.

\item For all $N$, $M\in \mathcal N$, if $\delta_M<\delta_N$, then there is some $N'\in \mathcal N$ such that $\delta_{N'}=\delta_N$ and $M\in N'$.

\end{enumerate}

\end{definition}

Following a suggestion of Inamdar, we refer to condition (4) in the above definition as \emph{the shoulder axiom}.

Strictly speaking, the phrase `$T$-symmetric system' is ambiguous in general since $H(\kappa)$ may not be determined by $T$. However, in all practical cases $(\bigcup T)\cap\Ord = \kappa$, so $T$ does determine $H(\kappa)$ in these cases.


Given a set $X$, we let $\cl(X)=X\cup \overline{X\cap\Ord}$, where $\overline{X\cap\Ord}$ denotes the closure of $X\cap\Ord$ in the order topology. We refer to $\cl(X)$ as \emph{the closure of $X$}.

The variant of the notion of symmetric system we are referring to is the following.

\begin{definition}\label{hom-1}
Let $T\subseteq H(\kappa)$, let  $\r\leq\k$, and let $\mathcal N$ be a collection of countable
 models. We say that $\mtcl N$ is a \emph{closed $T$-$\r$-symmetric system}
if and only if the following holds.

\begin{enumerate}

\item For every $N \in\mathcal N$, $(N; \in, T)$ is an elementary substructure of $(H(\kappa); \in, T)$.

\item Given $N_0$, $N_1\in\mathcal N$, if $\delta_{N_0}=\delta_{N_1}$, then there is a unique isomorphism $$\Psi_{N_0,  N_1}:(N_0; \in, T)\into (N_1; \in, T)$$

\noindent Furthermore,  $\Psi_{N_0, N_1}$ extends to an isomorphism $$\Psi^\ast_{N_0, N_1}: (\cl(N_0); \in, N_0)\into (\cl(N_1); \in, N_1)$$ which is the identity on $\cl(N_0)\cap\cl(N_1)$.

\item For all $N_0$, $N_1$, $M\in \mathcal N$, if $M \in N_0$ and $\delta_{N_0}=\delta_{N_1}$, then $\Psi_{N_0, N_1}(M)\in\mathcal N$.

\item For all $N$, $M\in \mathcal N$, if $\delta_M<\delta_N$, then there are $N'$, $M'\in \mathcal N$ such that $\delta_{N'}=\delta_N$, $\d_{M'}=\d_M$, $M'\in N'$, and $\Psi_{M, M'}(\x)=\x$ for each $\x\in M\cap\r$.
\end{enumerate}\end{definition}

In contexts in which $T$ is not relevant, we will sometimes refer to closed $T$-$\r$-symmetric systems simply as `closed $\r$-symmetric systems'.

We will need the two following amalgamation lemmas. These lemmas are proved in \cite{forcing-conseqs} for symmetric systems of models, but almost literally the same proofs work for
closed
$\r$-symmetric systems of models.


\begin{lemma}\label{amalg0-}
Let $T\sub H(\kappa)$, $\r\leq\k$, let $\mtcl N$ be a
closed
 $T$-$\r$-symmetric system, let $N\in \mtcl N$, and let $\mtcl M\in N$ be a
closed
$T$-$\r$-symmetric system. Assume that
\begin{enumerate}
\item $\mtcl N\cap N \subseteq \mtcl M$,
\item $M \cap \mtcl M = M \cap \mtcl N$ for each $M \in \mtcl N\cap N$, and
\item $\rho \in \cl(M)$ for every $M \in \mtcl M \cup \mtcl N$.
\end{enumerate}
Then, $$\mtcl W(\mtcl N, \mtcl M, N):=\mtcl N\cup\bigcup\{\Psi_{N, N'}``\mtcl M \,:\, N'\in \mtcl N,\,\delta_{N'}=\delta_N\}$$
is the $\subseteq$-minimal
closed
 $T$-$\r$-symmetric system $\mtcl W$ such that $\mtcl N\cup \mtcl M\subseteq \mtcl W$.
\end{lemma}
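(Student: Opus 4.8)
The plan is to follow the proof of the analogous amalgamation lemma for ordinary symmetric systems from \cite{forcing-conseqs}, verifying at each step that the extra clause in Definition \ref{hom-1} — namely that $\Psi_{N_0,N_1}$ extends to $\Psi^\ast_{N_0,N_1}$ on closures, and that in the shoulder axiom one may additionally fix $M\cap\r$ pointwise — is preserved. First I would check that $\mtcl W:=\mtcl W(\mtcl N,\mtcl M,N)$ is well defined: for $N'\in\mtcl N$ with $\d_{N'}=\d_N$ the map $\Psi_{N,N'}$ is an isomorphism of $(N;\in,T)$ onto $(N';\in,T)$, so $\Psi_{N,N'}``\mtcl M$ is again a closed $T$-$\r$-symmetric system (isomorphic copies of one), and in particular a collection of countable elementary submodels of $(H(\k);\in,T)$; here I use hypothesis (3), $\r\in\cl(M)$ for all relevant $M$, to see that $\r$ is fixed by the relevant isomorphisms (since $\Psi^\ast$ fixes $\cl(N)\cap\cl(N')$ and, by elementarity and hypothesis (3) applied inside $N$, $\r$ lies in that intersection when $\r<\k$; when $\r=\k$ there is nothing to fix below $\k$). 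Then $\mtcl N\cup\mtcl M\sub\mtcl W$ follows because $\mtcl M=\Psi_{N,N}``\mtcl M$ and $N\in\mtcl N$ with $\d_N=\d_N$.

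Next I would verify the four clauses of Definition \ref{hom-1} for $\mtcl W$. Clause (1) is immediate. For clauses (2) and (3) the key point is to analyse membership in $\mtcl W$: a model $P\in\mtcl W\bs\mtcl N$ has the form $P=\Psi_{N,N'}(Q)$ for some $N'\in\mtcl N$ of height $\d_N$ and some $Q\in\mtcl M$. Given two such $P_0,P_1$ of the same height, one writes $P_i=\Psi_{N,N_i'}(Q_i)$ and uses: that $\Psi_{N_0',N_1'}$ exists in $\mtcl N$ (clause (2) for $\mtcl N$), that it restricts to an isomorphism of $\mtcl N\cap N_0'$-copies, and that $Q_0,Q_1$ are related by an isomorphism inside $\mtcl M$ (clause (2) for $\mtcl M$, using $\mtcl N\cap N\sub\mtcl M$ from hypothesis (1) and the agreement hypothesis (2)); composing these yields $\Psi_{P_0,P_1}$, and one checks it extends to closures by composing the corresponding $\Psi^\ast$ maps, which exist for $\mtcl N$ and $\mtcl M$ by their clause (2). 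The identity-on-intersection property is obtained exactly as in \cite{forcing-conseqs}, tracking that the witnessing isomorphisms in $\mtcl N$ and $\mtcl M$ are themselves identity on intersections. Mixed cases ($P_0\in\mtcl N$, $P_1\notin\mtcl N$) are handled by the shoulder axiom for $\mtcl N$, which produces a copy of $P_1$'s ``source'' inside a model of $\mtcl N$ of height $\d_N$, reducing to a case internal to $\mtcl M$. Clause (3), closure under $\Psi_{N_0,N_1}$-images of members, is a bookkeeping consequence of the same case analysis together with clause (3) for $\mtcl N$ and for $\mtcl M$.

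For clause (4), the closed shoulder axiom, given $M,N^\dagger\in\mtcl W$ with $\d_M<\d_{N^\dagger}$: if both lie in $\mtcl N$, apply clause (4) of $\mtcl N$; if $N^\dagger\notin\mtcl N$, write $N^\dagger=\Psi_{N,N'}(N'')$ with $N''\in\mtcl M$, find inside $\mtcl M$ (if $M$ has a copy there, via hypothesis (1)) or inside $\mtcl N$ a suitable $M'$ of height $\d_M$ with $M'\in$ the relevant model and $\Psi_{M,M'}\restr(M\cap\r)=\mathrm{id}$, then transport by $\Psi_{N,N'}$, which fixes $\r$ and hence commutes appropriately with the ``$\restr\r$'' requirement by hypothesis (3). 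Finally, minimality: any closed $T$-$\r$-symmetric system $\mtcl W'\sup\mtcl N\cup\mtcl M$ must, by its clauses (2) and (3), contain $\Psi_{N,N'}``\mtcl M$ for every $N'\in\mtcl N$ of height $\d_N$ (each such $\Psi_{N,N'}$ being the unique isomorphism given by clause (2)), hence contains $\mtcl W$.

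\medskip

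\emph{Main obstacle.} The delicate point, and the only place where this differs in substance from \cite{forcing-conseqs}, is verifying the \emph{closed} part of clause (2) — that the composite isomorphisms $\Psi_{P_0,P_1}$ extend to $\Psi^\ast_{P_0,P_1}$ on the \emph{closures} $\cl(P_i)$, identity on $\cl(P_0)\cap\cl(P_1)$ — and the \emph{fixes $M\cap\r$} part of clause (4). Both hinge on hypothesis (3), $\r\in\cl(M)$ for all $M\in\mtcl M\cup\mtcl N$: this is exactly what guarantees that the ordinal closures behave coherently under the amalgamating isomorphisms and that $\r$ is never ``moved'' when we conjugate a shoulder witness by $\Psi_{N,N'}$. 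I would therefore spend the bulk of the argument on a careful lemma-internal claim describing precisely which models lie in $\mtcl W$ and, for each, exhibiting the $\Psi^\ast$ map as an explicit composition of the $\Psi^\ast$ maps already available from $\mtcl N$ and $\mtcl M$, checking the intersection-fixing clause by a diagram chase. Everything else is a routine adaptation of the known proof.
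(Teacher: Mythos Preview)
Your proposal is correct and is exactly the approach the paper takes: the paper gives no explicit proof of this lemma, stating only that it is proved in \cite{forcing-conseqs} for ordinary symmetric systems and that ``almost literally the same proofs work for closed $\rho$-symmetric systems of models.'' Your plan to rerun that argument while checking the closure-extension of $\Psi^\ast$ and the $\rho$-fixing in the weakened shoulder axiom (via hypothesis (3)) is precisely the intended adaptation.
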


Given $T\sub H(\kappa)$, $\r\leq\k$, and closed $T$-$\r$-symmetric systems $\mtcl N_0$ and $\mtcl N_1$, we write $\mtcl N_0\cong_T \mtcl N_1$ iff $|\mtcl N_0|=|\mtcl N_1|=m$, for some $m<\o$, and there are enumerations $(N^0_i\,:\, i<m)$ and $(N^1_i\,:\, i<m)$ of $\mtcl N_0$ and $\mtcl N_1$, respectively, for which there is an isomorphism
$\Psi$ between $$(\bigcup(\dom(\mtcl N_0)); \in, T, N^0_i)_{i<m}$$ and $$(\bigcup(\dom(\mtcl N_1)); \in, T, N^1_i)_{i<m}$$ which is the identity on $$\bigcup(\dom(\mtcl N_0))\cap \bigcup(\dom(\mtcl N_1))$$

\begin{lemma}\label{amalg1-}  Let $T\sub H(\k)$ be a predicate, $\r\leq\k$, let $\mtcl N_0$ and $\mtcl N_1$ be
closed
$T$-$\r$-symmetric systems, and suppose $\mtcl N_0\cong_T\mtcl N_1$. Then $\mtcl N_0\cup\mtcl N_1$ is a
closed
$T$-$\r$-symmetric system. \end{lemma}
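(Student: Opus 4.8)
The plan is to check that $\mtcl W:=\mtcl N_0\cup\mtcl N_1$ satisfies clauses (1)--(4) of Definition~\ref{hom-1}, transcribing the verification carried out for ordinary $T$-symmetric systems in \cite{forcing-conseqs} and adding the little extra that the `closed' strengthening of clause~(2) and the marker strengthening of clause~(4) demand. Fix the data witnessing $\mtcl N_0\cong_T\mtcl N_1$: some $m<\o$, enumerations $(N^0_i)_{i<m}$, $(N^1_i)_{i<m}$ of $\mtcl N_0$ and $\mtcl N_1$, and an isomorphism $\Psi$ between the structures $(\bigcup\dom(\mtcl N_0);\in,T,N^0_i)_{i<m}$ and $(\bigcup\dom(\mtcl N_1);\in,T,N^1_i)_{i<m}$ which is the identity on $\bigcup\dom(\mtcl N_0)\cap\bigcup\dom(\mtcl N_1)$; thus $\Psi\restr N^0_i$ is an isomorphism of $(N^0_i;\in,T)$ onto $(N^1_i;\in,T)$ for each $i<m$. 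I would first record three facts. (a) Since $\Psi$ maps ordinals order-preservingly to ordinals and fixes $\o_1$ (which lies in both unions), $\Psi$ is the identity on every countable ordinal in $\bigcup\dom(\mtcl N_0)$; in particular $\delta_{N^0_i}=\delta_{N^1_i}$ for all $i<m$. (b) $N^0_i\in N^0_j$ iff $N^1_i\in N^1_j$, since $\Psi$ preserves $\in$ and the predicates and is the identity on the overlap. (c) Between two countable elementary submodels of $(H(\k);\in,T)$ of equal height there is at most one isomorphism; consequently, if $\delta_{N^0_i}=\delta_{N^0_j}$, then the unique isomorphism $\Psi_{N^0_i,N^1_j}$ equals $(\Psi\restr N^0_j)\circ\Psi_{N^0_i,N^0_j}$, and similarly in related situations.

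Clause~(1) is immediate from $\mtcl N_0$ and $\mtcl N_1$ satisfying it. For clause~(3), let $N_0,N_1,M\in\mtcl W$ with $M\in N_0$ and $\delta_{N_0}=\delta_{N_1}$; if $N_0,N_1,M$ all lie in a single $\mtcl N_k$ this is that clause in $\mtcl N_k$, and otherwise I would reduce to that case via (a)--(c). For example, if $N_0,M\in\mtcl N_0$ and $N_1=N^1_j\notin\mtcl N_0$, then $\delta_{N_0}=\delta_{N^1_j}=\delta_{N^0_j}$, so clause~(3) in $\mtcl N_0$ gives $\Psi_{N_0,N^0_j}(M)=N^0_l\in\mtcl N_0$ for some $l<m$, and then $\Psi_{N_0,N_1}(M)=(\Psi\restr N^0_j)(N^0_l)=\Psi(N^0_l)=N^1_l\in\mtcl N_1\sub\mtcl W$; the remaining configurations are handled symmetrically. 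For clause~(4), let $M,N\in\mtcl W$ with $\delta_M<\delta_N$; if $M,N$ lie in a common $\mtcl N_k$, apply clause~(4) there. Otherwise, say $M\in\mtcl N_0$ and $N=N^1_k\in\mtcl N_1\setminus\mtcl N_0$; by (a), $N^0_k\in\mtcl N_0$ satisfies $\delta_{N^0_k}=\delta_{N^1_k}=\delta_N>\delta_M$, so clause~(4) inside $\mtcl N_0$ applied to $M$ and $N^0_k$ produces $N',M'\in\mtcl N_0\sub\mtcl W$ witnessing the clause for $M$ and $N$. The marker requirement $\Psi_{M,M'}(\x)=\x$ for $\x\in M\cap\r$ comes for free here, because $M,M'\in\mtcl N_0$ and the map $\Psi_{M,M'}$ computed inside $\mtcl N_0$ coincides with the one computed inside $\mtcl W$.

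The only genuinely new content is in clause~(2), which is the step I expect to demand the most care. Existence and uniqueness of $\Psi_{A,B}$ for $A,B\in\mtcl W$ of the same height go exactly as in \cite{forcing-conseqs}: if $A,B$ lie in a common $\mtcl N_k$ this is given; otherwise write $A=N^0_i$, $B=N^1_j$, note $\delta_{N^0_i}=\delta_{N^1_j}=\delta_{N^0_j}$ by (a), and take $\Psi_{A,B}=(\Psi\restr N^0_j)\circ\Psi_{N^0_i,N^0_j}$, which by (c) is the unique isomorphism. The new point is the extension $\Psi^\ast_{A,B}\colon(\cl(A);\in,A)\into(\cl(B);\in,B)$ that is the identity on $\cl(A)\cap\cl(B)$. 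In the mixed case I would build it as the composite
$$\cl(A)=\cl(N^0_i)\ \xrightarrow{\ \Psi^\ast_{N^0_i,N^0_j}\ }\ \cl(N^0_j)\ \xrightarrow{\ \widehat\Psi\ }\ \cl(N^1_j)=\cl(B),$$
where $\Psi^\ast_{N^0_i,N^0_j}$ is supplied by $\mtcl N_0$ being a closed $\r$-symmetric system and $\widehat\Psi$ is the canonical extension of $\Psi\restr N^0_j$ to $\cl(N^0_j)$ obtained by sending a limit point $\gamma$ of $N^0_j\cap\Ord$ to $\ssup(\Psi[N^0_j\cap\gamma])$. The delicate step --- the one that is not a literal transcription of \cite{forcing-conseqs} --- is to verify that $\widehat\Psi$ is the identity on the ordinals of $\cl(N^0_j)\cap\cl(N^1_j)$, and more generally that the composite above fixes $\cl(A)\cap\cl(B)$ pointwise. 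I would do this through the transitive-collapse characterization of the maps $\Psi_{N_0,N_1}$ used in \cite{forcing-conseqs}: $N^0_j$ and $N^1_j$ have the same Mostowski collapse (same height, same theory over $N^0_j\cap N^1_j$), $\Psi\restr N^0_j$ is the induced comparison map, and this comparison lifts canonically to the closures, while $\Psi$ being the identity on the common part of the two systems pins down the behaviour of $\widehat\Psi$ at common limit points. Once this coherence is established, $\Psi^\ast_{A,B}$ is as required, all four clauses of Definition~\ref{hom-1} hold for $\mtcl W$, and the lemma follows.
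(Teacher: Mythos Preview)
Your proposal is correct and follows exactly the route the paper itself indicates: the paper does not give a standalone proof of this lemma but simply remarks that the argument from \cite{forcing-conseqs} for ordinary $T$-symmetric systems carries over ``almost literally'' to closed $T$-$\rho$-symmetric systems. Your write-up is precisely that adaptation, with the expected extra care for the extension $\Psi^\ast$ in clause~(2) and the observation that the witnesses for clause~(4) can be taken inside a single $\mathcal N_k$, so the marker condition is inherited.
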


Let us fix a function $$\Phi:S^\k_{\o_2} \into H(\kappa)$$
such that $\Phi^{-1}(x)$ is stationary in $S^\k_{\o_2}$ for all $x\in H(\kappa)$. Note that $\Phi$ exists by $2^{{<}\kappa}=\kappa$.

We will define our sequence $(\mtcl P_\b\,:\,\b\leq\kappa)$ of forcing notions by recursion on $\beta$, together with a sequence $(\Phi_\b\,:\,\b<\kappa)$ of predicates of $H(\k)$. Given $\b<\kappa$, we will denote by $\mtcl T_\b$ the club of countable $N\sub H(\k)$ such that $(N; \in, \Phi_\b)\elsub (H(\k); \in, \Phi_\b)$. We will also let $\vec \Phi_\b=(\Phi_{\a+1}\,:\,\a<\b)$

To start with, we let $\Phi_0$ be the satisfaction predicate for the structure $(H(\k); \in, \Phi, \vec E)$.

We will call ordered pairs $(N, \r)$, where
\begin{itemize}
\item $N\elsub (H(\k); \in, \Phi, \vec E)$ is a countable model,
\item $\r<\k$, and
\item $N\in\mtcl T_{\a+1}$ for every $\a\in N\cap\r$
\end{itemize}
\noindent \emph{models with marker}.\footnote{In the definition of $\mtcl P_\b$, we will assume $\Phi_{\a+1}$ has been defined for all $\a<\b$. While definining $\mtcl P_\b$, we will refer to the notion of model with marker. In that case, the marker $\r$ will be at most $\b$, and hence $\mtcl T_{\a+1}$ will be defined for all $\a\in N\cap\r$.}

If $(N, \r)$ is a model with marker, we will sometimes say that \emph{$\r$ is the marker of $(N, \r)$}.

In our forcing construction, we will use models with markers $(N, \r)$ in a crucial way. The marker $\r$ will tell us that $N$ is to be seen as `active' for all stages in $N\cap\r$.

Given an ordinal $\a$ and a collection $\D$ of models with markers, we let $$\D\av_\a=\{(N, \min\{\a, \r\})\,:\,  (N, \r)\in\D\}$$

Given a set $\D$ of models with markers and an ordinal $\a<\k$, we let $$\D^{-1}(\a)=\{N\,:\, (N, \a)\in\D\}$$ and $$\mtcl N^\D_\a=\{N\in\D^{-1}(\a)\,:\,\a\in N\}$$ We also let $\mtcl M^\D_\a=\{N\in\dom(\D)\,:\,\a\in\cl(N)\}$.

We will make crucial use of the following notion of closed symmetric system
of models with markers.

 \begin{definition}\label{hom0}
Let $\b<\k$ and let $\D$ be a finite collection of models with markers $(N, \r)$ such that $\r\leq\b$.
We say that $\D$ is a
 \emph{closed $\vec\Phi_\b$-symmetric system (of models with markers)}
if and only if the following holds.

\begin{enumerate}
\item $(N, \bar\r)\in \D$ for every $(N, \r)\in\D$ and every $\bar\r<\r$.

\item The following holds for every $\a\leq\b$.
\begin{enumerate}
\item $\mtcl M^\D_\a$ is a closed $\Phi_0$-$\a$-symmetric system.
\item If $\a<\b$, then $\mtcl N^\D_{\a}$ is a closed $\Phi_{\a}$-$\a$-symmetric system.
\item For all $N_0$, $N_1\in\mtcl N^\D_{\a+1}$ such that $\d_{N_0}=\d_{N_1}$, for all $\r \leq \alpha+1$, and for every $(M, \r)\in\D\cap N_0$, $(\Psi_{N_0, N_1}(M), \Psi_{N_0, N_1}(\r))\in\D$.
\end{enumerate}
\end{enumerate}
\end{definition}

In the above, we may leave out the parameter $\vec\Phi_\b$ in contexts where it is clear or irrelevant.

Given $\b<\k$ and
 closed
$\vec\Phi_\b$-symmetric systems $\D_0$ and $\D_1$ of models with markers,
we write $$\D_0\cong_{\vec\Phi_\beta} \D_1$$ iff $|\dom(\D_0)|=|\dom(\D_1)|=m$, for some $m<\o$, and there are enumerations $(N^0_i\,:\,i<m)$ and $(N^1_i\,:\,i<m)$ of $\dom(\D_0)$ and $\dom(\D_1)$, respectively, together with countable $M_0$, $M_1\elsub H((2^{{<}\k})^+)$ for which there is an isomorphism
$$\Psi:(M_0; \in, \Phi_\a, \D^*_0, N^0_i)_{i<m, \a<\b}\into (M_1; \in, \Phi_\a, \D^*_1, N^1_i)_{i<m, \a<\b},$$ where for each $i$, $\D^*_i=\D_i\cap M_i$, which is the identity on $M_0\cap M_1$.

The proof of the following lemma is essentially the same as the proof of Lemma \ref{amalg1-}.

\begin{lemma}\label{amalg1} Let $\b<\k$, let $\D_0$ and $\D_1$ be
closed
$\vec\Phi_\b$-symmetric systems of models with markers,
and suppose $\D_0\cong_{\vec \Phi_\b}\D_1$. Then $\D_0\cup\D_1$ is a
closed
$\vec\Phi_\b$-symmetric system of models with markers. \end{lemma}

If $R$ is a relation, i.e., a set of ordered pairs, $\dom(R)$ denotes the set $\{a\,:\, (a, b)\in R\mbox{ for some }b\}$ and, given $a\in \dom(R)$, $R``\{a\}$ denotes the set $\{b\,:\, (a, b)\in R\}$. Also, $\range(R)=\bigcup\{R``\{a\}\,:\, a\in \dom(R)\}$.

We will call a function $F$ \emph{relevant} if $\dom(F)\in [S^\k_{\o_2}]^{{<}\o}$ and for every $\a\in \dom(F)$, $F(\a)=(d_\a, b_\a)$, where

\begin{itemize}

 \item $d_\a$ is a finite relation with $\dom(d_\a)\sub \o_1$ and such that $d_\a``\{\d\}\in [\d\times H(\k)]^{{<}\o}$ for each $\d\in \dom(d_\a)$, and
  \item $b_{\alpha}$ is a function such that
 $\dom(b_{\alpha})\subseteq \o_1$ and such that
 $b_{\alpha}(\delta)< \delta$ for every $\delta\in \dom(b_{\alpha})$.
 \end{itemize}

In the above situation, we will often refer to $d_\a$ and $b_\a$ as, respectively, $d^F_\a$ and $b^F_\a$.




Throughout the paper, given an ordered pair $q=(F, \D)$, where $F$ is a relevant function and $\D$ is a collection of models with markers,
we will often denote $F$ and $\D$ by, respectively, $F_q$ and $\D_q$. Given $\a\in \dom(F_q)$, we will denote $d^{F_q}_\a$ and $b^{F_q}_\a$ by, respectively, $d^q_\a$ and $b^q_\a$.

If $q=(F_q, \D_q)$, where $F_q$ and $\D_q$ are as above, and $\b<\k$, we let $\mtcl N^q_\beta$ stand for $\mtcl N^{\D_q}_\b$. We also let $\mtcl M^q_\beta$ stand for $\mtcl M^{\D_q}_\beta$.
Also, if $G$ is a set of ordered pairs $q=(F_q, \D_q)$ as above, then we let $\mtcl N^G_\b=\bigcup\{\mtcl N^q_\b\,:\, q\in G\}$.

Given an ordinal $\b$ and a set $X$, let $(\b)^X$ denote $$\ssup\{\x+1\,:\,\x\in X\cap\b\}$$ Also, if $q=(F_q, \D_q)$ is as above, we write $\a(q)$ to denote $$\ssup\{\x+1\,:\,\x\in (\dom(F_q)\cup(\bigcup\dom(\D_q)))\cap\a\}$$



\subsection{Defining $\mtcl P$}

We will now define our sequence $(\mathcal P_\beta\,:\,\beta\leq\kappa)$ of forcing notions and our sequence $(\Phi_\b\,:\,\b<\k)$ of predicates.
Given $\a\leq\kappa$,
 $\dot G_\alpha$ will be the canonical $\mathcal P_\alpha$-name for the generic filter added by $\mtcl P_\a$.
We shall also denote the forcing relation for $\mtcl P_\a$ by $\Vdash_\a$, and the extension relation for $\mtcl P_\a$ by $\leq_\a$.

Given any $\a\in S^\k_{\o_2}$, and assuming $\mtcl P_\a$ has been defined, we let $\dot C^{\a}$ be some canonically chosen (using $\Phi$) $\mathcal P_{\a}$-name for a club-sequence on $\omega_1^V$ such that $\mathcal P_{\a}$ forces that

\begin{itemize}

\item $\dot C^{\a} = \Phi(\a)$ in case $\Phi(\a)$ is a $\mathcal P_{\a}$-name for a club-sequence on $\omega_1$, and that

\item $\dot C^{\a}$ is some fixed club-sequence on $\o_1$ in the other case.

\end{itemize}

Given $\d\in\Lim(\o_1)$, let $\dot C^\a_\d$ be a name for $\dot C^\a(\d)$.

Let $\beta \leq \kappa$, and suppose $\mathcal P_\alpha$, $\Phi_\a$ and $\Phi_{\a+1}$
have been defined for each $\a<\b$. We aim to define $\mtcl P_\b$ and $\Phi_{\b+1}$, and also $\Phi_\b$ if $\b<\k$ is a nonzero limit ordinal.

An ordered pair $q=(F_q, \D_q)$ is a $\mathcal P_\beta$-condition if and only if it has the following pro\-perties.

\begin{enumerate}

\item $\D_q$ is a closed $\vec\Phi_\b$-symmetric system of models with markers.
\item If $\cf(\b)\geq\o_1$, then $\r<\b$ for every $(N, \r)\in\D_q$.
\item $F_q$ is a relevant function such that $\dom(F_q)\sub \beta\cap\bigcup\mtcl N^q_0$.
\item For every $\alpha<\beta$ and every $\bar\alpha<\alpha$, \emph{the restriction of $q$ to $\bar\alpha$}, $q\arrowvert_{\bar\alpha}$, is a condition in $\mathcal P_\alpha$, where
 $$q\av_{\bar\a}:=(F_q\restr \bar\alpha, \D_q\av_{\bar\a})$$ Furthermore, if $\cf(\a)\leq\o$, then $q\av_\a\in\mtcl P_\a$.
\item
If $\a\in \dom(F_q)$, then $F_q(\a)=(d^q_\a, b^q_\a)$ satisfies the following.

\begin{enumerate}


 \item $\dom(b^q_\a)\cup\dom(d^q_\a)\sub \{\d_N\,:\, N\in\mtcl N^q_{\a+1}\}$ and $d^q_\a``\{\d\}\sub\bigcup\{N\in \mtcl N^q_{\a+1}\,:\,\d_N=\d\}$ for every $\d\in \dom(d^q_\a)$.





\item For every $\d\in \dom(d^q_\a)$, for all  $N_0$, $N_1\in\mtcl N^q_{\a+1}$ such that $\d_{N_0}=\d_{N_1}=\d$, and for every $(\epsilon, a)\in N_0\cap d^q_\a``\{\d\}$,
\begin{enumerate}
\item $(\epsilon, \Psi_{N_0, N_1}(a))\in d^q_\a``\{\d\}$, and
\item for every $N\in\mtcl N^q_{\a+1}$ such that $\epsilon<\d_N<\d$ there is some $N'\in\mtcl N^q_{\a+1}$ such that $\d_{N'}=\d_N$ and $a\in N'$.   
\end{enumerate}

  \item
 The following holds for every $N\in\mtcl N^q_{\a+1}$ such that $\delta_{N}\in \dom(b^q_{\alpha})$.
 \begin{enumerate}
 \item For every $M\in\mathcal N^q_{\alpha+1}$ such that $b^q_{\alpha}(\delta_{N})< \d_M<\delta_{N}$, $q\av_{\a(q)}\Vdash_\a\d_M\notin\dot C^\a_{\d_{N}}$.
\item $q\av_{\a(q)}$ forces in $\mtcl P_\a$ that for each $a\in N$ there is some $M\in\mtcl T_{\a+1}$ such that $(M, (\a)^M)\in\D_p$ for some $p\in\dot G_\a$, $a\in M$, and $\d_M\in\d_N\setminus\dot C^\a_{\d_N}$.
\end{enumerate}
\end{enumerate}
\end{enumerate}

Given $\mathcal P_\beta$-conditions $q_i$, for $i =0$, $1$, $q_1\leq_\b q_0$ if and only if the following holds.

\begin{enumerate}
\item $\D_{q_0}\sub \D_{q_1}$
\item $\dom(F_{q_0})\sub \dom(F_{q_1})$ and the following holds for every $\alpha\in \dom(F_{q_0})$.

\begin{enumerate}

\item $d^{q_0}_\a\sub d^{q_1}_\a$
\item $b^{q_0}_\alpha\subseteq b^{q_1}_\alpha$
\end{enumerate}

\end{enumerate}


We still need to define $\Phi_{\b+1}$, and $\Phi_\b$ if $\b<\k$ is a nonzero limit ordinal.

Let $\Vdash_\b^*$ denote the restriction of the forcing relation $\Vdash_\b$ for $\mtcl P_\b$ to formulas involving only names in $H(\k)$. Then we let $\Phi_{\b+1}\sub H(\k)$ canonically code the satisfaction relation for the structure $$(H(\k); \Phi_\b, \mtcl P_\b, \Vdash_\b^*)$$

Finally, if $\b<\k$ is a nonzero limit ordinal, we let $\Phi_\b$ be a subset of $H(\k)$ canonically coding $(\Phi_\a\,:\,\a<\b)$.

We may, and will, assume that the definition of $(\Phi_\b\,:\,\b<\k)$ is uniform in $\b$.

\section{Proving Theorem \ref{mainthm}}\label{relevant_facts}
In this section we will prove the lemmas that, together, will yield a proof of Theorem \ref{mainthm}.
 Our first three lemmas are obvious.

\begin{lemma} For all $\a<\b\leq\kappa$, $\mtcl P_\a\sub\mtcl P_\b$.
 \end{lemma}

\begin{lemma}\label{unions} For every $\b\leq\k$, if $\cf(\b)\geq\o_1$, then $\mtcl P_\b=\bigcup_{\a<\b}\mtcl P_\a$. \end{lemma}

\begin{lemma}\label{definability} For every $\b<\k$, $\mtcl P_\b$ and $\Vdash_\b^*$ are definable over the structure $(H(\k); \in, \Phi_{\b+1})$ without parameters. Furthermore, we may assume this definition to be uniform in $\b$. \end{lemma}





Given relevant functions $F_0$ and $F_1$, let $F_0+F_1$ denote the function $F$ with domain $\dom(F_0)\cup\dom(F_1)$ such that for every $\a\in \dom(F)$,
\begin{itemize}
\item $F(\a)=F_\epsilon(\a)$ if $\epsilon\in 2$ and $\a\in\dom(F_\epsilon)\setminus \dom(F_{1-\epsilon})$, and
\item $F(\a)=(d^{F_0}_\a\cup d^{F_1}_\a, b^{F_0}_\a\cup b^{F_1}_\a)$ if $\a\in\dom(F_0)\cap\dom(F_1)$.
\end{itemize}

Our next lemma shows in particular that $\mtcl P_\b$ has the $\al_2$-chain condition for each $\b\leq\k$.

\begin{lemma}\label{cc}
For every $\b\leq \kappa$, $\mtcl P_\b$ is $\al_2$-Knaster.
\end{lemma}

\begin{proof} We prove by induction on $\b\leq\k$ that if $(q_\n\,:\,\n<\o_2)$ is a sequence of $\mtcl P_\b$-conditions, then there is $I\in [\o_2]^{\al_2}$ such that $q_{\n_0}$ and $q_{\n_1}$ are compatible in $\mtcl P_\b$ for all $\n_0$, $\n_1\in I$. Let us fix any cardinal $\chi$ such that $\k<\chi$ and let $M_\n$ be, for each $\n<\o_2$, a countable elementary submodel of $H(\chi)$ such that $\vec\Phi_\b$, $q_\n\in M_\n$. By $\CH$ there is $X\in [\o_2]^{\al_2}$ and a set $R$ such that $M_{\nu_0} \cap M_{\nu_1}=R$ for all distinct $\nu_0$, $\nu_1 \in X$.
Using again $\CH$, by shrinking $X$ if necessary we may assume as well that, for some $n$, $m<\o$, there are, for all $\nu\in X$, enumerations $((N^{\nu}_i, \r^\n_i)\,:\, i<n)$ and $(\x^\nu_j\,:\,j<m)$ of $\dom(\D_{q_\nu})$ and $\dom(F_{q_\nu})$, respectively, such that
for all $\nu_0\neq \nu_1$ in $X$ there is an isomorphism $\Psi$ between the structures
 $$\langle M_{\nu_0}; \in,  R,  \D_{q_{\nu_0}}\cap M_{\nu_0}, N^{\nu_0}_i, \r^{\n_0}_i, \x^{\nu_0}_j,  d^{q_{\nu_0}}_{\x^{\nu_0}_j}, b^{q_{\nu_0}}_{\x^{\nu_0}_j}, \vec\Phi_\b\cap M_{\nu_0}\rangle_{i<n, j<m}$$ and  $$\langle M_{\nu_1}; \in,  R,  \D_{q_{\nu_1}}\cap M_{\nu_1}, N^{\nu_1}_i,  \r^{\n_1}_i, \x^{\nu_1}_j,  d^{q_{\nu_1}}_{\x^{\nu_1}_j}, b^{q_{\nu_1}}_{\x^{\nu_1}_j}, \vec\Phi_\b\cap M_{\nu_1}\rangle_{i<n, j<m}$$

We may of course assume that $(\a_{\n_0}; \in, \p_{\nu_0}``R)\cong (\a_{\n_1}; \in, \pi_{\nu_1}``R)$, where $\a_{\n_i}\in\o_1$ is the Mostowski collapse of $M_{\n_i}\cap \Ord$ and $\pi_{\nu_i}$ is the corresponding collapsing function. But then we have that $\Psi$ is the identity on $R\cap\Ord$. This yields that $\Psi$ is the identity on $R\cap H(\k)$ since the function $\Phi:S^\kappa_{\o_2} \into H(\kappa)$ is surjective.

For $\b=0$ the desired conclusion of the lemma follows from Lemma \ref{amalg1-}. Let us now consider the case $\cf(\b)=\o$. We may fix some $\bar\b<\b$ such that $\cf(\bar\b)\leq\o$ and for which there is some $J\in [X]^{\al_2}$ such that $\dom(F_{q_\nu})\sub\bar\b$ for all $\nu\in J$.
By the induction hypothesis for $\bar\b$ we may find $I\in [J]^{\al_2}$ such that $q_{\n_0}\av_{\bar\b}$ and $q_{\n_1}\av_{\bar\b}$ are compatible conditions in $\mtcl P_{\bar\b}$ for any two distinct $\n_0$, $\n_1$ in $I$. For all such $\n_0$ and $\n_1$, let us fix a common extension $q_{\n_0, \n_1}\in\mtcl P_{\bar\b}$ of $q_{\n_0}\av_{\bar\b}$ and  $q_{\n_1}\av_{\bar\b}$. Now, using the fact that $\Psi$ is an isomorphism between the structures
$$\langle M_{\nu_0}; \in,  R,  \D_{q_{\nu_0}}\cap M_{\nu_0}, N^{\nu_0}_i,  \r^{\n_0}_i, \x^{\nu_0}_j,  d^{q_{\nu_0}}_{\x^{\nu_0}_j}, b^{q_{\nu_0}}_{\x^{\nu_0}_j}, \vec\Phi_\b\cap M_{\nu_0}\rangle_{i<n, j<m}$$ and  $$\langle M_{\nu_1}; \in,  R, \D_{q_{\nu_1}}\cap M_{\nu_1}, N^{\nu_1}_i,  \r^{\n_1}_i, \x^{\nu_1}_j,  d^{q_{\nu_1}}_{\x^{\nu_1}_j}, b^{q_{\nu_1}}_{\x^{\nu_1}_j}, \vec\Phi_\b\cap M_{\nu_1}\rangle_{i<n, j<m}$$ fixing $M_{\n_0}\cap M_{\n_1}$ together with Lemma \ref{amalg1}, it follows from the above assumptions that for any two distinct $\n_0$, $\n_1$ in $I$, $$(F_{q_{\n_0, \n_1}}, \D_{q_{\n_0, \n_1}}\cup \D_{q_{\n_0}}\cup \D_{q_{\n_1}})$$ is a condition in $\mtcl P_\b$ extending both $q_{\n_0}$ and $q_{\n_1}$.

The conclusion in the case $\b=\a+1$ follows from the induction hypothesis for $\a$, together with Lemma \ref{amalg1}, by essentially the same argument as in the previous case (with $\a$ for $\bar\b$). In this case, if $I$ is the corresponding set in $[\o_2]^{\al_2}$, we may assume that $\a\in F_{q_\n}$ for all $\n\in I$ (in the other case we may assume that $\a\notin F_{q_\n}$ for all $\n\in I$ and we can then finish by a simpler version of the proof in the present case). If $\n_0<\n_1$ are both in $I$, and $q_{\n_0, \n_1}$ is a common extension of $q_{\n_0}\av_{\a}$ and $q_{\n_1}\av_{\a}$ in $\mtcl P_\a$, $$(F_{q_{\n_0, \n_1}}\cup\{(\a, F_{q_{\n_0}}(\a)+F_{q_{\n_1}}(\a))\}, \D_{q_{\n_0, \n_1}}\cup\D_{q_{\n_0}}\cup\D_{q_{\n_1}})$$ is a common extension of $q_{\n_0}$ and $q_{\n_1}$.

If $\cf(\b)>\o_2$, then by Lemma \ref{unions} we may fix $\bar\b<\b$ such that $q_\n\in \mtcl P_{\bar\b}$ for all $\n\in X$. Hence, the desired conclusion follows in this case from the induction hypothesis for $\bar\b$.

If $\cf(\b)=\o_1$, then there is, again by Lemma \ref{unions}, some $\bar\b<\b$ and some $J\in [X]^{\al_2}$ such that $q_\n\in \mtcl P_{\bar\b}$ for all $\n\in J$, and again we can finish the proof in this case using the induction hypothesis for $\bar\b$.

Finally, suppose $\cf(\b)=\o_2$ and let $\bar\b<\b$ be such that $\cf(\bar\b)\leq\o$ and $R\cap\k\sub\bar\b$.
By the induction hypothesis for $\bar\b$ we may assume, after refining $X$ if necessary, that $q_{\n_0}\av_{\bar\b}$ and $q_{\n_1}\av_{\bar\b}$ are compatible for all distinct $\n_0$, $\n_1$ in $X$. As in the countable cofinality case, let us fix, for all distinct $\n_0$, $\n_1$ in $X$, a condition $q_{\n_0, \n_1}\in\mtcl P_{\bar\b}$ extending both $q_{\n_0}\av_{\bar\b}$ and $q_{\n_1}\av_{\bar\b}$. We may assume that there is no $\b^*\in [\bar\b,\,\b)$ for which there is any $J\in [X]^{\al_2}$ such that $q_\n\in \mtcl P_{\b^*}$ for all $i\in J$ as otherwise we are done by an application of the induction hypothesis for $\b^*$. It follows that there is a strictly increasing sequence $(\b_\x)_{\x<\o_2}$ of ordinals in $\b$ above $\bar\b$ with range cofinal in $\b$ and such that for each $\x<\o_2$ there is exactly one $\n_\x\in X$ such that $(M_{\n_\x}\cap\k)\setminus  \bar\b\sub (\b_\x, \b_{\x+1})$.
But now it is easy to see that for any $\x_0<\x_1<\o_2$, $$(F_{q_{\n_{\x_0}, \n_{\x_1}}}\cup (F_{q_{\n_{\x_0}}}\restriction [\bar\b,\,\b))\cup (F_{q_{\n_{\x_1}}}\restriction [\bar\b, \b)),  \D_{q_{\n_{\x_0}, \n_{\x_1}}}\cup\D_{q_{\n_{\x_0}}}\cup\D_{q_{\n_{\x_1}}})$$ is a condition in $\mtcl P_\b$ extending both of $q_{\n_{\x_0}}$ and $q_{\n_{\x_1}}$.
\end{proof}

Recall that a set $C\sub\k$ is said to be an \emph{$\o_2$-club of $\k$} in case $C$ is unbounded in $\k$ and $\a\in C$ for every $\a\in S^\k_{\o_2}$ such that $\a=\ssup(C\cap\a)$.

Let us fix some large enough cardinal $\chi$ and let $D$ be the set of ordinals of the form $Q\cap\k$, where $Q\prec H(\chi)$ is such that $\Phi\in Q$, $^{\o_1} Q\sub Q$, and $Q\cap\k\in \k$.\footnote{Here is where we use the regularity of $\k$.} For every $\a<\k$ there is some such $Q$ such that $\a\in Q\cap\k$.
Hence, we have the following.

\begin{fact}
$D$ contains an $\o_2$-club of $\k$.
\end{fact}

Let $\vec\Phi=(\Phi_\a\,:\,\a<\k)$.

Our sequence $(\mtcl P_\a\,:\,\a\leq\k)$ is not obviously a forcing iteration, in the sense of $\mtcl P_\a$ being a complete suborder of $\mtcl P_\b$ for all $\a<\b$. However we have the following two lemmas,
which will be enough for our purposes.

\begin{lemma}\label{compl0} For every $\a<\k$, every $q\in\mtcl P_{\a+1}$, and every $r\in\mtcl P_\a$, if $r$ extends $q\av_{\a(q)}$, then $(F_r\cup\{(\a, F_q(\a))\}, \D_q\cup\D_r)\in\mtcl P_\b$.\end{lemma}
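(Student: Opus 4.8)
The statement is a ``one-step completeness'' fact: given $q\in\mathcal P_{\a+1}$ and $r\in\mathcal P_\a$ with $r\le_\a q\av_{\a(q)}$, I want to glue $r$ back together with the top coordinate of $q$ and the full side-condition $\D_q$. Write $q^\ast:=(F_r\cup\{(\a,F_q(\a))\},\,\D_q\cup\D_r)$. The plan is to verify the six clauses in the definition of a $\mathcal P_{\a+1}$-condition one at a time, using that $q$ and $r$ already satisfy the corresponding clauses relative to $\mathcal P_{\a+1}$ and $\mathcal P_\a$ respectively, and that $r$ extends $q\av_{\a(q)}$, which ties the two side-conditions together below $\a(q)$.

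\textbf{Key steps.} First I would check that $\D_{q^\ast}=\D_q\cup\D_r$ is a closed $\vec\Phi_{\a+1}$-symmetric system. The subtlety is that below $\a(q)$ the two systems must agree and amalgamate, and above $\a(q)$ only $\D_q$ contributes a top marker $\a$; markers $>\a(q)$ cannot occur in $\D_r$ at all since $\dom(F_r)\cup\bigcup\dom(\D_r)\subseteq\a$ and the markers of $r$ are $\le$ the relevant stages. The point is that $\D_q\av_{\a(q)}$ is (a subset of) $\D_r$-relevant data and $r\le q\av_{\a(q)}$ forces $\D_{q\av_{\a(q)}}\subseteq\D_r$; conversely $\D_q\setminus\D_{q\av_{\a(q)}}$ consists of pairs $(N,\r)$ with $\r\in[\a(q),\a+1]$, i.e.\ $\r=\a$ after restriction is undone. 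So $\D_q\cup\D_r$ is obtained from $\D_r$ by raising the markers of some $N$ from $\a(q)$ up to $\a$, plus possibly adding models already forced to be present; the symmetric-system clauses (1), (2a)--(2c) of Definition \ref{hom0} then follow because each $\mathcal M^{\ast}_\gamma$ and $\mathcal N^{\ast}_\gamma$ for $\gamma\le\a$ coincides with the corresponding object of either $r$ or $q$, and Definition \ref{hom0}(2c) at stage $\a$ is exactly clause (5)(b) of being a condition, already known for $q$. Second, clause (2) of the condition definition is vacuous since $\cf(\a+1)=1<\o_1$. Third, $F_{q^\ast}$ is relevant and $\dom(F_{q^\ast})=\dom(F_r)\cup\{\a\}\subseteq(\a+1)\cap\bigcup\mathcal N^{q^\ast}_0$: the $\dom(F_r)$ part is inherited from $r$, and $\a\in\bigcup\mathcal N^q_0\subseteq\bigcup\mathcal N^{q^\ast}_0$ since $\a\in\dom(F_q)$. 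Fourth, for clause (4) I must see that $q^\ast\av_{\bar\a}$ is a $\mathcal P_{\bar\a}$-condition for every $\bar\a<\a+1$: if $\bar\a\le\a(q)$ this restriction equals $r\av_{\bar\a}$ (since $F_q(\a)$ is removed and $\D_q\av_{\bar\a}\subseteq\D_r\av_{\bar\a}$ by $r\le q\av_{\a(q)}$, so the union collapses to $r\av_{\bar\a}$), which is a condition because $r\in\mathcal P_\a$; if $\a(q)\le\bar\a\le\a$ then $q^\ast\av_{\bar\a}=(F_r\restr\bar\a,\,\D_q\av_{\bar\a}\cup\D_r\av_{\bar\a})$ and I would again identify this with $r\av_{\bar\a}$ up to markers in $[\a(q),\bar\a]$, reducing to the $\mathcal P_{\bar\a}$-membership of $r\av_{\bar\a}$; the ``furthermore'' part ($\cf(\bar\a)\le\o\Rightarrow q^\ast\av_{\bar\a}\in\mathcal P_{\bar\a}$) is the same statement. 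Finally, clause (5): for $\gamma\in\dom(F_r)$ the requirements (5)(a),(b),(c) hold because they held for $r$ and $\mathcal N^{q^\ast}_{\gamma+1}$, $d^{q^\ast}_\gamma$, $b^{q^\ast}_\gamma$ all equal those of $r$ (note $F_{q^\ast}(\gamma)=F_r(\gamma)$ since $\gamma<\a\notin\dom(F_r)$, using $\a\in S^\k_{\o_2}$ so $\a\notin\dom(F_r)\subseteq\a$); for $\gamma=\a$ the requirements hold because they held for $q$, using that $\mathcal N^{q^\ast}_{\a+1}=\mathcal N^q_{\a+1}$ (no model of $\D_r$ has marker $\a+1$ or even $\a$ after the gluing changes nothing at level $\a+1$), that $d^{q^\ast}_\a=d^q_\a$, $b^{q^\ast}_\a=b^q_\a$, and that $\a(q^\ast)$ restricted below $\a$ gives a condition $q^\ast\av_{\a(q^\ast)}$ extending $q\av_{\a(q)}$, so the forcing statements in (5)(c)(i),(ii)---which are about $\Vdash_\a$ and $\dot C^\a$---are preserved downward/upward appropriately (they were forced by $q\av_{\a(q)}$, and $q^\ast\av_{\a(q^\ast)}$ extends it).

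\textbf{Main obstacle.} The delicate point is the first step: checking that $\D_q\cup\D_r$ is genuinely a closed $\vec\Phi_{\a+1}$-symmetric system rather than merely a union of two such. This is where I would lean on the hypothesis $r\le_\a q\av_{\a(q)}$ most heavily: it guarantees $\D_{q\av_{\a(q)}}\subseteq\D_r$, so $\D_q$ and $\D_r$ share all their ``low'' data and the only new content in $\D_q$ is at the single top stage $\a$, where Definition \ref{hom0} only imposes (2a) (that $\mathcal M^{\D}_{\a}$ is a closed $\Phi_0$-$\a$-symmetric system) and (2b) is not required at $\gamma=\b=\a$; clause (2a) at stage $\a$ for the union is inherited from $q$ together with the observation that any $N\in\D_r$ with $\a\in\cl(N)$ already lies in $\mathcal M^q_\a$ because $\a\in\cl(N)$ and $N$'s marker is at least $\a(q)$ forces, via $r\le q\av_{\a(q)}$, that $N$ was already ``seen'' by $q$. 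If instead some genuinely new model with $\a\in\cl(N)$ appeared in $r$ that $q$ did not know about, one would need an amalgamation lemma (Lemma \ref{amalg0-} or \ref{amalg1}) to close up; I expect the correct reading of the construction is that this does not happen because $\a(q)$ is chosen precisely so that all of $\D_q$'s relevant models below $\a$ coincide with $\D_{q\av_{\a(q)}}\subseteq\D_r$, making the union a literal, already-closed union. I would make this identification explicit and then the remaining clauses are bookkeeping.
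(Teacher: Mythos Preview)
The paper gives no proof of this lemma; it is stated as a routine fact and left to the reader. Your plan of simply running through the clauses of the definition of $\mtcl P_{\a+1}$-condition is exactly what is intended, and the key observations you make are the right ones: $\dom(\D_q)\subseteq\dom(\D_r)$ (so $\dom(\D_{q^\ast})=\dom(\D_r)$), and for every $\gamma\le\a$ one has $\mtcl N^{\D_{q^\ast}}_\gamma=\mtcl N^{\D_r}_\gamma$ while $\mtcl N^{\D_{q^\ast}}_{\a}=\mtcl N^{\D_q}_{\a}$ and $\mtcl N^{\D_{q^\ast}}_{\a+1}=\mtcl N^{\D_q}_{\a+1}$ (the last two because $\cf(\a)=\o_2$ forces every marker in $\D_r$ to lie strictly below $\a$).

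Two corrections are worth recording. First, your assertion that ``markers $>\a(q)$ cannot occur in $\D_r$'' is false: $r\in\mtcl P_\a$ allows markers anywhere below $\a$, and extending $q\av_{\a(q)}$ can certainly introduce models with markers in the interval $(\a(q),\a)$. What you actually need (and what is true) is only that no marker of $\D_r$ equals $\a$ or $\a+1$. Relatedly, $\D_q\setminus\D_{q\av_{\a(q)}}$ consists of all $(N,\r)\in\D_q$ with $\r>\a(q)$, not just those with $\r\in\{\a,\a+1\}$; but since $\mtcl N^{\D_q}_\gamma=\emptyset$ for $\a(q)\le\gamma<\a$ (because $\gamma\in N$ would force $\gamma<\a(q)$ by the definition of $\a(q)$), this does not disturb the identifications above. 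Second, clause (2)(c) of Definition~\ref{hom0} is \emph{not} the same as clause (5)(b) of the definition of condition: (2)(c) is about copying pairs $(M,\r)\in\D$ under $\Psi_{N_0,N_1}$, whereas (5)(b) is a symmetry requirement on the working part $d^q_\a$. At level $\a+1$, clause (2)(c) for $\D_{q^\ast}$ is inherited from $\D_q$ because $\mtcl N^{\D_{q^\ast}}_{\a+1}=\mtcl N^{\D_q}_{\a+1}$ and any $(M,\r)\in\D_{q^\ast}\cap N_0$ with $N_0\in\mtcl N^{\D_q}_{\a+1}$ has $\r\in N_0\subseteq\bigcup\dom(\D_q)$, so either $\r<\a(q)$ (and then $(M,\r)\in\D_r$ is handled via $\D_r$) or $\r\in\{\a,\a+1\}$ (handled via $\D_q$). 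The intermediate levels $\gamma+1\in(\a(q),\a)$ require a word: there $\mtcl N^{\D_{q^\ast}}_{\gamma+1}=\mtcl N^{\D_r}_{\gamma+1}$, and any $N_0$ in this set must lie outside $\dom(\D_q)$ (else $\gamma+1\in N_0$ would force $\gamma+1<\a(q)$), so one reduces to (2)(c) for $\D_r$ after observing that the relevant $(M,\r)$ already lies in $\D_r$ up to the marker available there. You gestured at this but did not carry it out; it is the one place where a line of argument, rather than bookkeeping, is needed.
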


\begin{lemma}\label{compl} For every $\a\in D$, $\mtcl P_\a$ is a complete suborder of $\mtcl P_\k$.
\end{lemma}

\begin{proof} It suffices to prove that every maximal antichain $A$ of $\mtcl P_\a$ is predense in $\mtcl P_\k$.

Let $Q\prec H(\chi)$ be a submodel witnessing that $\a\in D$. For every $r\in A$, let $\pi(r)\in \mtcl P_\a\cap Q$ be such that there are countable $M$, $M'\in H(\k^+)$ such that
\begin{itemize}
\item $r\in M$ and $\p(r)\in M'$,
\item $M\cap M'=M\cap Q$, and
\item there is an isomorphism $\Psi_r:(M; \in, r)\into (M'; \in, \pi(r))$ which is the identity on $M\cap M'$.
\end{itemize}

For every $r\in A$ we may find $\p(r)$ as above thanks to the fact that $^{\o}Q\sub Q$.  By Lemma \ref{cc}, $|A|\leq\al_1$. Hence, since in fact $^{\o_1}Q\sub Q$, $\p(A):=\{\p(r)\,:\,r\in A\}\in Q$.

\begin{claim}
$Q\models\p(A)\mbox{ is predense in }\mtcl P_\k$.\end{claim}

\begin{proof}
Otherwise there would be some $q\in \mtcl P_\k\cap Q$ which is incompatible with $\p(r)$ for every $r\in A$. Since $q\in\mtcl P_\a$, there is some $r\in A$ together with a common extension $q'\in\mtcl P_\a$ of $q$ and $r$. Let now 
$$\D_0=\bigcup_{\bar\r\leq\r}\{(\Psi^{-1}_r(N), \bar\r)\,:\, (N, \r)\in \D_{q'}\cap P,\,(P, \r)\in\D_{\p(r)}\}$$ and $$\D_1=\bigcup_{\bar\r\leq\r}\{(\Psi_r(N), \bar\r)\,:\, (N, \r)\in \D_{q'}\cap P,\,(P, \r)\in\D_{r}\},$$
and let 
$q''=(F_{q'}, \D_{q'}\cup\D_{\p(r)}\cup\D_0\cup\D_1)$. 
Then $q''$ is a condition in $\mtcl P_\k$ extending $q$ and $\p(r)$.\footnote{Getting that  $q''$ is indeed a condition is the reason for having the weak form of the shoulder axiom in clause (4) from Definition \ref{hom-1}, rather than the original full shoulder axiom. A similar remark applies for the last step of the proof of the lemma.}
 But that is of course a contradiction.
\end{proof}

It follows, by correctness of $Q$ within $H(\chi)$, that $\p(A)$ is predense in $\mtcl P_\k$. But then $A$ is predense in $\mtcl  P_\k$ as well. To see this, let $q\in \mtcl P_\k$ and let $r\in A$ be such that there is a common extension $q'\in\mtcl P_\k$ of $q$ and $\p(r)$.  Finally, let 
 $$\D_0=\bigcup_{\bar\r\leq\r}\{(\Psi^{-1}_r(N), \bar\r)\,:\, (N, \r)\in \D_{q'}\cap P,\,(P, \r)\in\D_{\p(r)}\}$$ and $$\D_1=\bigcup_{\bar\r\leq\r}\{(\Psi_r(N), \bar\r)\,:\, (N, \r)\in \D_{q'}\cap P,\,(P, \r)\in\D_r\},$$  and let 
$q''=(F_{q'}, \D_q\cup\D_r\cup\D_0\cup\D_1)$.
Then $q''$ is a condition in $\mtcl P_\k$ extending $q$ and  $r$.\end{proof}

\subsection{Properness}\label{subsection-properness}
Given a countable elementary substructure $N$ of $H(\kappa)$ and a $\mtcl P_\beta$-condition $q$, for some $\b\leq\kappa$, we will say that \emph{$q$ is $(N, \mtcl P_\beta)$-generic} if and only if $q$ forces $\dot G_\beta\cap A\cap N\neq\emptyset$ for every maximal antichain $A$ of $\mtcl P_\beta$ such that $A\in N$. Note that this is more general than the standard notion of $(N, \mtbb P)$-genericity, for a forcing notion $\mtbb P$, which applies only if $\mtbb P\in N$. Indeed, in our situation $\mtcl P_\beta$ is of course never a member of $N$ if $N\sub H(\kappa)$.


The properness of $\mtcl P_\b$, for all $\b\leq\kappa$, will be an immediate consequence of Lemmas \ref{properness1} and \ref{properness2} (and \ref{cc}).

\begin{lemma}\label{properness1} Let $\b<\kappa$, $q\in \mtcl P_{\b}$, and let $N$ be a countable elementary submodel of $H(\k)$ such that $q\in N$, $\b\in N$, and $N\in\mtcl T_{\x+1}$ for each $\x\in N\cap\b$.
Then there is an extension $q^*\in\mtcl P_\b$ of $q$ such that $(N, (\b)^N)\in\D_{q^*}$.
\end{lemma}

\begin{proof}
 It is straightforward to see that $$q^*=(F_q,  \D_q\cup\{(N, \r)\,:\,\r\leq (\b)^N\})$$ is such an extension.
\end{proof}

Our main properness lemma is the following.

\begin{lemma}\label{properness2} Let $\beta<\kappa$, $q\in\mtcl P_\b$, $N\in\mtcl T_{\b+1}$,
and suppose $(N, (\b)^N)\in\D_q$. Then $q$ is $(N, \mtcl P_\b)$-generic.
\end{lemma}

\begin{proof}
The proof is by induction on $\b$.
Let $A\in N$ be a maximal antichain of $\mtcl P_\b$, and suppose that $q$ extends some condition $r_0\in A$. We want to prove that $r_0\in N$, and for this it suffices to show that $q$ is compatible with a condition in $A\cap N$.
The case $\b=0$ follows at once from Lemma \ref{amalg0-} and the definition of $\mtcl T_1$, so we will assume in what follows that $\b>0$.

Let us next consider the case when $\b$ is a successor ordinal $\a+1$.
We may assume that $\alpha \in \dom(F_q)\cap N$ and $\d_N\in \dom(b^{q}_{\a})$, as otherwise the proof is a simpler version of the proof in this case.
Let $\dot B$ be a $\mtcl P_\a$-name for the (partially defined) function on $A\times\o_1$ sending a pair $(r, \eta)$ to some condition $q^r_\eta\in\mtcl P_\b$ with the following properties (provided there is some such $q^r_\eta$; otherwise $\dot B$ is not defined at $(r, \eta)$).

\begin{enumerate}
\item $q^r_\eta$ extends $r$.
\item $q^r_\eta\av_{\a(q^r_\eta)}\in \dot G_\a$
\item $\a\in \dom(F_{q^r_\eta})$, $d^q_\a\cap N\sub d^{q^r_\eta}_\a$,
and $b^q_\a\cap N\sub b^{q^r_\eta}_\a$.
\item $\D_q\cap N\sub\D_{q^r_\eta}$
\item For every $Q\in \mtcl N^q_\b\cap N$, $Q\cap\mtcl N^q_\b=Q\cap\mtcl N^{q^r_\eta}_\b$.
\item $\d_Q>\eta$ for every $Q\in\mtcl N^{q^r_\eta}_\b$ such that $\d_Q\neq \d_R$ for any $R\in\mtcl N^q_\b\cap N$.
\end{enumerate}

By the $\al_2$-c.c.\ of $\mtcl P_\b$ and $\k^{\al_1}=\k$ we may in fact take $\dot B$ to be in $H(\k)$.  Also, by Lemma \ref{definability} and since $N\in\mtcl T_{\b+1}$ and $A\in N$, we may assume that $\dot B\in N$.

By an instance of clause (5)(c)(ii) in the definition of condition, we may find an extension $q'\in\mtcl P_\a$ of $q\av_{\a(q)}$ for which there is some $M\in\mtcl N^{q'}_\a\cap\mtcl T_{\a+1}$ such that  $(M, (\a)^M)\in\D_{q'}$, $\d_M<\d_N$, $\dot B \in M$, and $q'\Vdash_\a \d_M\notin\dot C^\a_{\d_N}$.
 By extending $q'$ if necessary, and using the openness (in $V^{\mtcl P_\a}$) of $\dot C^\a_\d$ for every $\d\in\Lim(\o_1)$, we may assume that there is some $\eta^*<\d_M$ such that $q'\Vdash_\a [\eta^*,\,\d_M)\cap \dot C^\a_\d=\emptyset$ whenever $\d\in \dom(b^q_\a)$ is such that $b^q_\a(\d)<\d_M<\d$.

Let now $G$ be $\mtcl P_\a$-generic and such that $q'\in G$. Working in $M[G]$, we may find some $r^*\in A$ such that $q^*:=\dot B_G(r^*, \eta^*)$ is defined. Indeed, the existence of such an $r^*$ is witnessed by $r_0$---as, in turn, witnessed by $q$---and, furthermore, the existence of such an $r^*$ can be expressed over the structure $(H(\k)[G]; \in)$ by a sentence with objects in $M[G]$ as parameters (namely, $G$, $\dot B$, and $\eta^*$).

By the induction hypothesis for $\a$ we have that $M[G]\cap V=M$ and therefore $q^*\in M$. In particular, we also have that $r^*\in M$. But then $r^*\in N$ since $|A|\leq\al_1$ by Lemma \ref{cc} and $\d_M<\d_N$. Since $q^*\av_{\a(q^*)}\in G$, we may find a common extension $q''\in\mtcl P_\a$ of $q^*\av_{\a(q^*)}$ and $q'$ forcing the above facts.
We may now amalgamate $q''$, $q^*$ and $q$ into a common extension $q^{**}\in\mtcl P_\b$. In order to do so, it suffices to let $$q^{**}=(F_{q''}\cup\{(\a, (d^q_\a\cup d^\dag,  b^q_\a\cup b^{q^*}_\a))\}, \D_{**}),$$ where $$\D_{**}=\D_{q''}\cup\D_q\cup\{(\Psi_{N, N'}(Q), \b), N'\in\mtcl N^q_\b,\,Q\in\mtcl N^{q^*}_\b,\,\d_{N'} =\d_N\},$$
and where $d^\dag$ is the relation with $\dom(d^\dag)=\dom(d^{q^*}_\a)$ such that $$d^\dag``\{\d\}=\bigcup\{\Psi_{N_0, N_0'}``(d^{q^*}_\a``\{\d\})\,:\, N_0\in\mtcl N^{q^*}_\b,\,N_0'\in\mtcl N^q_\b,\,\d_{N_0} =\d_{N_0'}=\d\}$$
for every $\d\in \dom(d^{q^*}_\a)$.

The following claim is immediate.

\begin{claim}\label{cl000}
The following are true.
\begin{enumerate}
\item $\{(Q, \r)\in\D_{**}\,:\,\r\leq\a\}=\D_{q''}$
\item $\D_{**}$ is a closed $\vec\Phi_{\b}$-symmetric system of models with markers.
\end{enumerate}
\end{claim}



Given the conclusion of the claim above, in order to prove that $q^{**}$ is a condition in $\mtcl P_\b$ it remains only to show that the relevant instances of clauses (5)(b) and (5)(c)(i) in the definition of condition holds for it. (5)(b) follows from the fact that $d^q_\a\cap N\sub d^{q^*}_\a$ and the construction of $d^\dag$.
Finally, (5)(c)(i) follows by the choice of $\eta^*$ and the fact that $\d_Q>\eta^*$ for every $Q\in \mtcl N^{q^*}_\b$ such that $\d_Q\notin\{\d_R\,:\, R\in\mtcl N^q_\b\cap N\}$.

This concludes the proof in this case since $r^*$ is a condition in $A\cap N$ weaker than $q^{**}$.

Let us now prove the lemma in the case when $\b$ is a nonzero limit ordinal.

Suppose first that $\cf(\b)>\o_1$. Since $|A|\leq\al_1$ by Lemma \ref{cc},\footnote{We may of course assume $|A|=\al_1$ since otherwise $r_0\in A\sub N$ and hence we are already done in that case.} we may find $\bar\b<\b$ such that $A$ is a maximal antichain of $\mtcl P_{\bar\b}$. Taking $\bar\b$ to be the least such ordinal, it follows that $\bar\b\in N$. But then we have that $q$ is $(N, \mtcl P_{\bar\b})$-generic by the induction hypothesis for $\bar\b$, and therefore $r_0\in N$.

We now move on to the case $\cf(\b)\leq\o_1$. Let $\bar\b=\ssup(N\cap\b)$, and notice that $\bar\b=\b$ if $\cf(\b)=\o$ and $\bar\b<\b$ if $\cf(\b)=\o_1$.
Let $E=E^0_\b$ if $\b\in S^\k_\o$ and $E=E^1_\b\cap\bar\b$ if $\cf(\b)=\o_1$.

\begin{claim}\label{E} $E\sub Q$ for every $Q\in \mtcl M^q_{\bar\b}$ such that $\d_N\leq\d_Q$. \end{claim}

\begin{proof}
Let us first consider the case $\cf(\b)=\o$. Since $(Q; \in, \vec E)\elsub (H(\k); \in, \vec E)$, it is enough to show that $\b\in Q$. If $\d_Q=\d_N$, then this follows from the fact that $\b\in N$ 
 and therefore $\Psi^*_{N, Q}$ is  an elementary embedding between $(\cl(N); \in, N)$ and $(\cl(Q); \in, Q)$ which is the identity on $\cl(N)\cap\cl(Q)$. Now suppose $\d_N<\d_Q$. Since $N$, $Q \in \mtcl M^q_\b$ and $\b \in N$, there are $N'$, $Q'\in\mtcl M^q_\b$ such that $\d_{N'}=\d_N$, $\d_{Q'}=\d_Q$, $N'\in Q'$ and $\b \in N'$. But then $\b\in Q'$ and again $\b\in Q$ since $\Psi^*_{Q', Q}$ is an elementary embedding between $(\cl(Q'); \in, Q')$ and $(\cl(Q); \in, Q)$ which is the identity on $\cl(Q')\cap\cl(Q)$.

Let us now consider the case $\cf(\b)=\o_1$. If $\d_N<\d_Q$, then by a similar argument as in the previous case 
we may find $N' \in\mtcl M^q_{\bar\b}$ such that $N'\in Q$, $\d_{N'}=\d_N$ and $N'\cap\bar\b$ is cofinal in $\bar\b$. Let $N'=Q$ if $\d_Q=\d_N$. It follows, in either case, that $\b^*:=\min(N'\setminus (\bar\b+1))$ exists and $\cf(\b^*)=\o_1$. But then $\bar\b$ is a limit point of both $E^1_\b$ and $E^1_{\b^*}$. Hence, by coherence of $\vec E^1$, $E=E^1_\b\cap\bar\b=E^1_{\b^*}\cap \bar\b$, and therefore $E\sub N'$ since in fact $E_{\b^*}\cap\bar\b=E_{\b^*}\cap N'$. But now we are done since $N'\sub Q$.
\end{proof}

Let now $\s\in E\cap\bar\b$ be
\begin{enumerate}
\item above $\dom(F_q)\cap\bar\b$ and
\item such that for every $Q\in\mtcl N^q_0$,
\begin{enumerate}
\item if $Q\cap\bar\b$ is bounded in $\bar\b$, then $\s>\ssup(Q\cap\bar\b)$, and
\item if $R_Q:=\{\r<\bar\b\,:\, Q\in\mtcl N^q_\r\}$ is bounded in $\bar\b$, then $\s>\ssup(R_Q)$.
\end{enumerate}
\end{enumerate}

For convenience we pick $\s$ such that $\cf(\s)\leq\o$.

Let $\mtcl N$ be the set of $N'\in\mtcl M^q_{\bar\b}$ such that $\d_{N'}\geq\d_N$ and $N'\in\mtcl N^q_\a$ for cofinally many $\a<\bar\b$. By Claim \ref{E} we know that $\s\in N'$ for every $N'\in\mtcl N$.


Let
$G$ be $\mtcl P_\s$-generic and such that $q\av_\s\in G$. Working in $N[G]$, we may find a condition $q^*\in\mtcl P_\b$ with the following properties.
\begin{enumerate}
\item $q^*$ extends some $r^*\in A$.
\item $q^*\av_\s\in G$.
\item $\D_q\cap N\sub\D_{q^*}$.
\item $M\cap \D_q =M\cap \D_{q^*}$ for every $M\in \dom(\D_q) \cap N$.
\end{enumerate}

We may indeed find a $q^*$ with these properties since the existence of such a condition is witnessed by $q$ and can be expressed over the structure $(H(\k)[G]; \in, \Phi_{\b+1})$ by a sentence with parameters in $N[G]$.
By the induction hypothesis applied to $\s$ we know that $N[G]\cap V=N$ and therefore $q^*\in N$, and in particular $r^*\in N$. It thus suffices, in order to finish the proof, to show that $q$ and $q^*$ are compatible.
Towards this, we first fix a common extension $q'\in\mtcl P_\s$ of $q\av_\s$ and $q^*\av_\s$ forcing the above facts.

Let us say that a collection $\D$ of models with markers is a
 \emph{closed $\vec\Phi_\b$-symmetric system above $\s$}
if and only if the following holds.

\begin{enumerate}
\item $(N, \bar\r)\in \D$ for every $(N, \r)\in\D$ and every $\bar\r<\r$.

\item The following holds for every $\a<\b$ such that $\s<\a$.
\begin{enumerate}
\item $\mtcl N^\D_{\a}$ is a closed $\Phi_{\a}$-$\a$-symmetric system.
\item For all $N_0$, $N_1\in\mtcl N^\D_{\a+1}$ such that $\d_{N_0}=\d_{N_1}$ and for every $(M, \r)\in\D\cap N_0$ with $\r\leq\a+1$, $(\Psi_{N_0, N_1}(M), \Psi_{N_0, N_1}(\r))\in\D$.
\end{enumerate}\end{enumerate}

\begin{claim}\label{o} There is $\D^{**}$, a closed $\vec\Phi_\b$-symmetric system above $\s$, of the form $\D_q\cup\D_{q^*}\cup\bigcup_{\bar\r<\b}\D\av_{\bar\r}$, where $\D$ is a set of models with markers $(M, \r)$ such that $\r\leq\bar\b$. 
\end{claim}

\begin{proof}
$\D$ is constructed as $$\bigcup\{\D_\a\,:\,\s<\a\leq\bar\b,\,\a\in N\},$$ where $(\D_\a\,:\,\s<\a\leq\bar\b,\,\a\in N)$ is the sequence of sets of models with markers defined by letting $\D_{\s+1}$ be the union of $\D_q^{-1}(\s+1)$ and $$\{(\Psi_{N, N'}(M), \bar\r)\,:\, (M, \r)\in\D_{q^*}\av_{\s+1},\,N'\in\mtcl N^q_{\s+1}\cap\mtcl N,\,\bar\r\leq\Psi_{N, N'}(\r)\}$$ and, given any $\a\in N\cap (\s+1,\,\bar\b]\cap N$ such that $\D_{\a'}$ has been defined for all $\a'\in (\a\cap N)\setminus (\s+1)$, letting 
$\D_\a$ be a closed $\vec\Phi_\a$-symmetric system which is the union of $$\bigcup\{\D_{\a'}\,:\,\a'\in \a\cap N\setminus (\s+1)\}\cup\D_q^{-1}(\a),$$ the set $$\D^\a:=\{(\Psi_{N, N'}(M), \bar\r)\,:\, (M, \r)\in\D_{q^*}\av_{\a},\,N'\in\mtcl N^q_{\a}\cap\mtcl N,\,\bar\r\leq\Psi_{N, N'}(\r)\},$$ and a set $\D(\a)$ of models of the form $(\Psi_{N_0, N_1}(M), \bar\r)$, for certain $N_0$, $N_1\in\mtcl N$, $(M, \r)\in \D^\a\cap N_0$ and $\bar\r\leq\Psi_{N_0, N_1}(\r)$. Such a set $\D(\a)$ 
can be constructed as $\bigcup\{\D(\a, n)\,:\, n<\o\}$, where $\D(\a, 0)=\D^\a$ and, for each nonzero $n$, $\D(\a, n)$ is the result of taking copies of $\D(\a, n-1)\cap N'$, for some $N'\in\mtcl N$ and some $\bar\a<\a$ such that $N'\in\mtcl N^q_{\bar\a+1}$, into other models in $\mtcl N\cap \mtcl N^q_{\bar\a+1}$. 
\end{proof}

Let $\D^{**}$ be as in Claim \ref{o}. By the construction of $\D^{**}$, the choice of $\s$, and the fact that $q'$ extends both $q\av_\s$ and $q^*\av_\s$, it is clear that $\D_{q'}\supseteq\D^*\av_\s$. But then $\D_{q'}\cup\D^{**}$ is a closed $\vec\Phi_\b$-symmetric system.

Let us finally define $$q^{**}=(F_{q'}\cup f^\dag\cup (F_q\restriction [\bar\b,\,\b)),  \D_{q'}\cup\D^{**}),$$ where $f^\dag$ is the function with domain $\dom(F_{q^*})\setminus\s$ obtained by letting $f^\dag(\a)=(d^\dag_\a, b^{q^*}_\a)$, for $d^\dag_\a$ being the relation with domain $\dom(d^{q^*}_\a)$ defined by letting $d^\dag_\a``\{\d\}$ be
$$d^\dag``\{\d\}=\bigcup\{\Psi_{N_0, N_0'}``(d^{q^*}_\a``\{\d\})\,:\, N_0\in\mtcl N^{q^*}_\b,\,N_0'\in\mtcl N^q_\b,\,\d_{N_0} =\d_{N_0'}=\d\}$$
We then have by the above that $q^{**}$ is a condition in $\mtcl P_\b$. This concludes the proof since of course $q^{**}$ extends $q$ and $q^*$.
\end{proof}

Corollary \ref{properness} follows from Lemmas \ref{properness1}, \ref{properness2}, and also Lemma \ref{cc} for $\b=\k$.

\begin{corollary}\label{properness} For every $\b\leq\kappa$, $\mtcl P_\b$ is proper.
\end{corollary}

Given a $\mtcl P_\k$-generic filter $G$ and $\a<\k$, let $$D^G_\a=\{\d_N\,:\, N\in\mtcl N^G_{\a+1}\}$$ Let also $\dot D_\a$ be a $\mtcl P_{\a+1}$-name for $D^G_\a$.

\begin{lemma}\label{new-reals} $\mtcl P_\kappa$ adds $\kappa$-many reals.
\end{lemma}

\begin{proof}
There are many ways to prove this lemma. The following argument is one of them.

Let $(\a_i)_{i<\k}$ be the strictly increasing enumeration of the set of ordinals $\a\in S^\k_{\o_2}$ such that $\Phi(\a)$ is a $\mtcl P_\a$-name for a ladder system---i.e., each $\dot C^\a_\d$ is forced to have order type $\o$. Let $\dot G$ be the canonical $\mtcl P_\k$-name for the generic object. Now, for every $i<\k$, let $\dot r_i$ be a name for the set of $n<\o$ with the property that if $\d$ is the $n$-th member, in its increasing enumeration, of the domain of $$\bigcup\{b^q_{\a_i}\,:\, q\in \dot G,\,\a_i\in \dom(F_q)\},$$ $q\in\dot G$ is such that $\a_i\in\dom(F_q)$, and $\d\in \dom(b^q_{\a_i})$, then $b^q_{\a_i}(\d)$ is of the form $\bar\d+k$, where $\bar\d\in\Lim(\o_1)$ and $k$ is an even natural number. Standard density arguments show that each $\dot r_i$ is a name for an infinite subset of $\o$---since each $\dot C^{\a_i}$ is forced to be a ladder system and hence we can in fact put $\d$ in the domain of $b_{\a_i}$ whenever $\d$ is in $D^G_\a$---and that $\Vdash_\k\dot r_i\neq \dot r_{i'}$ if $i\neq i'$.
\end{proof}

On the other hand, since $\mathcal P_\k$ has cardinality $\k$ and has the $\al_2$-chain condition, there are $\k^{\al_1}=\kappa$ many nice names (s.\ \cite{KUNEN}) for subsets of $\omega_1$, and hence $\mathcal P_\kappa$ forces $2^{\aleph_0}=2^{\aleph_1}=\kappa$.

\subsection{Measuring}\label{subsection-measuring}

\begin{lemma}\label{measuring} $\mathcal P_\kappa$ forces $\Measuring$.
\end{lemma}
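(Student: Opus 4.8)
The goal is to show that in $V^{\mtcl P_\k}$, every club-sequence $\vec C$ on $\o_1$ is measured by some club $C\sub\o_1$. By the genericity of $\Phi$, any such $\vec C$ is equal to $\dot C^\a$ for stationarily many $\a\in S^\k_{\o_2}$; fix one such $\a$ for which $\Phi(\a)$ actually names $\vec C$. The candidate club $C$ is then read off from the generic: set $C=\{\d_N\,:\,N\in\mtcl N^G_{\a+1},\ \text{(a coherence/tail condition holds)}\}$, or more precisely $C$ is the closure of $D^G_\a=\{\d_N\,:\,N\in\mtcl N^G_{\a+1}\}$ intersected with a carefully chosen tail structure. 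The intuition is exactly the content of clause (5)(c) in the definition of condition: the function $b^q_\a$ records, for each $\d=\d_N\in\dom(b^q_\a)$, an ordinal $b^q_\a(\d)<\d$ past which $C\cap\d$ is forced to avoid $\dot C^\a_\d$; and clause (5)(c)(ii) (the density of inserting witnessing models $M$ with $\d_M\notin\dot C^\a_{\d_N}$) is what lets us, below $\d_N$, either thin $C$ to lie inside $\dot C^\a_{\d_N}$ or push it outside. So for each $N\in\mtcl N^G_{\a+1}$ with $\d_N\in C$, either $b^G_\a(\d_N)$ is defined — and then a tail of $C\cap\d_N$ is disjoint from $\dot C^\a_{\d_N}=C_{\d_N}$ — or (by a density argument) a tail of $C\cap\d_N$ is contained in $C_{\d_N}$.

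\textbf{Key steps, in order.} First, fix $\a\in S^\k_{\o_2}$ with $\Phi(\a)$ a $\mtcl P_\a$-name for $\vec C$, which exists by surjectivity of $\Phi$ onto $H(\k)$ together with the fact (Lemma \ref{compl}) that on the $\o_2$-club $D$ the $\mtcl P_\a$ form an honest iteration, so that $\vec C\in V^{\mtcl P_\a}$ for club-many $\a$ and $\Phi^{-1}$ of the corresponding name is stationary. Second, define $C$ from $G$: let $C=\cl(D^G_\a)\cap\o_1$, and verify $C$ is club in $\o_1$ using Lemma \ref{properness1} (density of adding models of any height $<\o_1$ to $\mtcl N_{\a+1}$) and Lemma \ref{properness2} (properness, so $\o_1$ is preserved and $D^G_\a$ is unbounded in $\o_1$). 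Third — the measuring verification — fix $\d\in\Lim(C)$; pick $N\in\mtcl N^G_{\a+1}$ with $\d_N=\d$ (available since $\d\in C$). Now split into cases according to whether it is dense to extend any $q\in G$ with $\a\in\dom(F_q)$ and $(N,(\a+1)^N)\in\D_q$ to a condition putting $\d$ into $\dom(b^q_\a)$ with $b^q_\a(\d)$ arbitrarily large below $\d$:
\begin{itemize}
\item If yes, then by clause (5)(c)(i) a tail of $\{\d_M\,:\,M\in\mtcl N^G_{\a+1},\ \d_M<\d\}=C\cap\d$ is disjoint from $\dot C^\a_\d=C_\d$, giving the second alternative in the definition of measuring.
\item If no, then there is a $q\in G$ and an ordinal $\g<\d$ past which no such extension exists; a density argument (using the witnessing clause (5)(c)(ii) and the amalgamation lemmas) shows that every $\d_M\in C\cap\d$ above $\g$ must lie in $C_\d$ — otherwise one could insert a further model and extend $b^q_\a$, contradicting the failure of the "yes" case. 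This gives the first alternative.
\end{itemize}
Finally, conclude: for every $\d\in\Lim(C)$ (hence, with a routine adjustment, for every $\d\in C'$ for a suitable club $C'\sub C$) one of the two alternatives holds, so $C'$ measures $\vec C$. Since $\vec C$ was arbitrary, $\Measuring$ holds in $V^{\mtcl P_\k}$.

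\textbf{Main obstacle.} The heart of the argument is the "yes/no" dichotomy above, i.e.\ showing that in the "no" case a tail of $C\cap\d$ is genuinely \emph{contained} in $C_\d$. This requires a genericity/density argument: given a condition $q\in G$ with $(N,(\a+1)^N)\in\D_q$ that cannot have $b^q_\a(\d)$ pushed up, one must argue that for each large enough $\d_M\in C\cap\d$, the model $M$ with $\d_M=\d_M$ was forced into $\mtcl N_{\a+1}$ together with the information $\d_M\in\dot C^\a_\d$; the subtlety is that $\dot C^\a_\d$ is only a $\mtcl P_\a$-name (not decided by the side conditions), and one must interleave the combinatorics of the symmetric system with the forcing at stage $\a$ — precisely what clauses (5)(c)(i) and (5)(c)(ii) were designed to handle, and where the correct (weak) shoulder axiom from Definition \ref{hom-1} and the amalgamation Lemmas \ref{amalg0-}, \ref{amalg1} must be invoked carefully. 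The second, more technical, obstacle is checking that $C$ (or a club subset) is correctly \emph{closed}: one needs the "closed" in "closed symmetric system" (Definition \ref{hom-1}, clause (2) with $\Psi^*$) together with the partial square sequence $\vec E^1$ — used in the limit case of the properness proof (Claim \ref{E}) — to guarantee that limit points $\d$ of $D^G_\a$ are themselves heights of models in $\mtcl N^G_{\a+1}$, so that the measuring clause applies at every relevant $\d$.
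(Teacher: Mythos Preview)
Your overall architecture---pick $\a\in D\cap S^\k_{\o_2}$ with $\Phi(\a)$ naming $\vec C$, take $D^G_\a$ as the candidate club, and run a dichotomy on whether $\d_N$ can enter $\dom(b_\a)$---matches the paper. But there is a genuine gap in two places, and both stem from the same omission: you never mention the relation $d^q_\a$ or clause (5)(b), and that is precisely the mechanism doing the work.

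\textbf{Closure of $D^G_\a$.} You attribute this to the closed-symmetric-system clause and the partial square $\vec E^1$, but that is incorrect: those ingredients are used in the properness proof (Claim~\ref{E}), not here. The paper's closure argument (Claim~\ref{closure}) is a density argument using $d^q_\a$. If $\d$ were a limit of $\dot D_\a$ not in $\dot D_\a$, one takes $N\in\mtcl N^q_{\a+1}$ of least height above $\d$, picks $\e<\d$ above all current heights below $\d_N$, and adds $(\e,\d)$ to $d^q_\a``\{\d_N\}$. Clause (5)(b)(ii) then requires, for any extension $q'$ and any $M\in\mtcl N^{q'}_{\a+1}$ with $\e<\d_M<\d_N$, a same-height copy $M'$ with $\d\in M'$; but $\d\in M'$ forces $\d<\d_{M'}=\d_M$, so no such $M$ can have $\d_M\leq\d$, and $\dot D_\a\cap\d$ is bounded by $\e$. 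There is nothing in the side-condition structure alone that prevents new models from accumulating to a height not already realised.

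\textbf{The ``no'' case.} Your contrapositive (``otherwise one could insert a further model and extend $b^q_\a$'') does not produce a single condition forcing a \emph{tail} of $D^G_\a\cap\d_N$ into $C_{\d_N}$. The failure of (5)(c)(ii) does yield a specific witness $a\in N$ such that every suitable $M$ with $a\in M$ and $\d_M<\d_N$ is forced to have $\d_M\in\dot C^\a_{\d_N}$. But one then needs to \emph{propagate} this: the paper extends $q$ by adding $(\e,a)$ to $d^q_\a``\{\d_N\}$, and now clause (5)(b)(ii) guarantees that any future $M\in\mtcl N^{q'}_{\a+1}$ with $\e<\d_M<\d_N$ has a same-height copy $M'\ni a$, whence $\d_M=\d_{M'}\in\dot C^\a_{\d_N}$ by the choice of $a$. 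Without $d^q_\a$ you have no device to force later models (added by stronger conditions) to see $a$, and hence no way to turn the single witness into a statement about the entire tail of the generic club.
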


\begin{proof}
Let $G$ be $\mathcal P_\kappa$-generic and let $\vec C=(C_\delta\,:\,\delta\in\Lim(\omega_1))\in V[G]$ be a club-sequence on $\omega_1$. We want to see that there is a club of $\omega_1$ in $V[G]$ measuring $\vec C$.
By the $\al_2$-c.c.\ of $\mtcl P$ together with $\kappa^{\aleph_1}=\kappa$, we may assume that $\vec C=\dot C_G$ for some $\mtcl P$-name $\dot C\in H(\kappa)$ for a club-sequence on $\o_1$. Again by the $\al_2$-c.c.\ of $\mtcl P$, together with Lemma \ref{compl} and the fact that $\Phi^{-1}(\dot C)$ is stationary in $S^\kappa_{\o_2}$, we may fix some $\a\in D$ such that $\dot C$ is in fact a $\mtcl P_\a$-name and $\Phi(\a)=\dot C$.
We then have that $\vec C=\Phi(\a)_G$.

It is immediate to see that $D^G_\a$ is unbounded in $\omega_1$. In fact, given any condition $q$ and any countable sufficiently correct $N\elsub H(\k)$ such that $q$, $\a\in N$, we may find by Lemma \ref{properness1} an extension $q^*\in\mtcl P_\k$ of $q$ such that $(N, \a+1)\in N$, and every such condition forces that $\d_N\in \dot D_\a$.

\begin{claim}\label{closure} $D^G_\a$ is closed in $\o_1$.
\end{claim}

\begin{proof}
Suppose, towards a contradiction, that $q\in\mtcl P_\kappa$ and $\d\in\Lim(\o_1)$ are such that $q$ forces $\d$ to be a limit point of $\dot D_\a$, but there is no extension $q'\in\mtcl P_{\kappa}$ of $q$ for which there is some $N\in\mtcl N^{q'}_{\a+1}$ such that $\d_N=\d$. By Lemma \ref{properness1} we may assume, without loss of generality, that there is some model $N\in\mtcl N^q_{\a+1}$ such that $\d_N>\d$. Let $N$ be of minimal height among such models. We may assume that $\d_N\in \dom(d^q_\a)$.  Now we may extend $q$ to a condition $q'$ obtained by adding $(\epsilon, \d)$ to $d^q_\a``\{\d_N\}$, where $\epsilon<\d$ is above $\d_M$ for every $M\in\mtcl N^q_{\a+1}$ such that $\d_M<\d_N$. But that yields a contradiction since then $q'$ forces, by clause (5)(b)(i) in the definition of condition, that $\dot D_\a\cap \d$ is bounded by $\epsilon<\d$.
\end{proof}

Given any $q\in G$ such that $\alpha\in \dom(F_q)$ and any limit ordinal $\d\in D^G_\a$, if $\d\in \dom(b^q_\alpha)$, then $D^G_\alpha\cap (b^q_\a(\d),\, \d)$ is disjoint from $C_\d$. Hence, in order to finish the proof of the lemma it is enough to show that if $q\in G$ is such that $\a\in \dom(F_q)$, $N \in\mtcl N^q_{\a+1}$, and there is no $q'\in G$ extending $q$ and such that $\d_{N}\in \dom(b^{q'}_\a)$, then a tail of $D^G_\a \cap \delta_N$ is contained in $C_{\d_{N}}$. Without loss of generality, we may and will assume that $\delta_N$ is a limit point of $D^G_\a$.

So, let $q$ be a condition with $\a\in \dom(F_q)$ and let $N \in\mtcl N^q_{\a+1}$ be such that $\d_{N}\notin\dom(b^{q'}_\a)$ for any $q'\in\mtcl P$ extending $q$. It suffices to find an extension $q^*$ of $q$ in $\mtcl P$ and some $\epsilon <\d_N$ with the property that if $q'\in\mtcl P$ extends $q^*$ and $M\in\mtcl N^{q'}_{\a+1}$ is such that $\epsilon <\d_M<\d_N$, then $q'\av_{\a(q')}\Vdash_\a \d_M\in \dot C^\a_{\d_N}$.


By our choice of $q$ and Lemma \ref{compl} together with the fact that $\a\in D$, we may assume, after extending $q\av_{\a(q)}$ if necessary, that there is some $a\in N$ such that $q\av_{\a(q)}$ forces that if $M\in\mtcl T_{\a+1}$ is such that $(M, \ssup(M\cap\a)))\in\D_p$ for some $p\in\dot G_\a$, $a\in M$, and $\d_M<\d_N$, then $\d_M\in\dot C^\a_\d$. We may of course assume that $\d_N\in \dom(d^q_\a)$. Let $\epsilon<\d_N$ be above $\d_M$ for every $M\in\mtcl N^q_{\a+1}$ such that $\d_M<\d_N$, and let  $q^*$ be an extension of $q$ obtained by adding $(\epsilon, a)$ to $d^q_\a``\{\d_N\}$ and closing under relevant isomorphisms $\Psi_{N_0, N_1}$.

We now show that $q^*$ and $\epsilon$ are as desired. For this, suppose $q'\in\mtcl P$ extends $q^*$ and $M\in\mtcl N^{q'}_{\a+1}$ is such that $\epsilon < \d_M<\d_N$. By an instance of clause (5)(b)(ii) in the definition of condition there is some $M'\in\mtcl N^{q'}_{\a+1}$ such that $\d_{M'}=\d_M$ and $a\in M'$. But then  $q'\av_{\a(q')}\Vdash_\a\d_{M'}=\d_M\in\dot C^\a_{\d_N}$ by the choice of $a$.
\end{proof}





\end{document}